\renewcommand\section{\@startsection {section}{1}{\z@}%
                                   {-3.5ex \@plus -1ex \@minus -.2ex}%
                                   {2.3ex \@plus.2ex}%
                                   {\centering\normalfont\bf}}
 \numberwithin{equation}{section}
\numberwithin{equation}{section}
\newcommand{\I}{\int_{0}^{1}}      \newcommand{\Id}{\int_{\mathbb{D}}}
\newcommand{\sk}{\sum_{k=0}^{\infty}}
\newcommand{\sn}{\sum_{n=0}^{\infty}}              
 \newcommand{\bv}{\mathcal {B}_{\nu}}                     \newcommand{\ii}{\mathcal {I}}
  \newcommand{\bw}{\mathcal {B}_{\omega}}
\newcommand{\hd}{H(\mathbb{D})}            \newcommand{\rrp}{\mathcal {R}^{+}}
\newcommand{\dd}{\mathbb{D}}           \newcommand{\n}{\mathcal {N}}
    \newcommand{\B}{\mathcal {B}}
\newcommand{\w}{\omega}
\newcommand{\fbw}{||f||_{\mathcal {B}_{\omega}}}
\newcommand{\fbv}{||f||_{\mathcal {B}_{\nu}}}
\numberwithin{equation}{section}
\theoremstyle{plain}
\newtheorem{thm}{Theorem}[section]
\newtheorem{lemma}[thm]{Lemma}
\newtheorem{pro}[thm]{Proposition}
\newtheorem{cor}[thm]{Corollary}
\newtheorem{re}[thm]{Remark}
\newenvironment{proof of Theorem 1.3}{{\noindent\it Proof of Theorem 1.3}\quad}{\hfill $\square$\par}
\newenvironment{proof of Theorem 1.1}{{\noindent\it Proof of Theorem 1.1}\quad}{\hfill $\square$\par}
\begin{document}
\title{Generalized integral type Hilbert operator acting on weighted  Bloch  space}
\author{Pengcheng Tang$^{*}$ \ \ Xuejun  Zhang }

\address{ College of Mathematics and Statistics, Hunan Normal University, Changsha, Hunan 410006, China}

\email{1228928716@qq.com}

\date{}
\keywords {Hilbert  type operator.  Bloch spaces. Taylor coefficient.
}

 \subjclass[2010]{47B35, 30H30}
\thanks{$^*$ Corresponding author.\\
 \ \ \ \ \ \ Email:www.tang-tpc.com@foxmail.com(Pengcheng Tang)\  xuejunttt@263.net(Xuejun Zhang)\\
\ \ \ \ \ \ The research is supported by the National Natural Science Foundation
of China (No.11942109) and Natural Science Foundation of Hunan Province of China (No. 2022JJ30369).
 }

\begin{abstract}
Let $\mu$ be a finite Borel measure on $[0,1)$. In this paper, we consider the generalized integral type Hilbert operator
 $$\ii_{\mu_{\alpha+1}}(f)(z)=\I \frac{f(t)}{(1-tz)^{\alpha+1}}d\mu(t)\ \ \ (\alpha>-1).$$
The operator $\ii_{\mu_{1}}$ has been extensively studied recently. The aim of this paper is to study the boundedness(resp. compactness) of    $ \ii_{\mu_{\alpha+1}}$ acting from the  normal weight Bloch  space into another of  the same kind. As consequences of our study, we
get completely results for the boundedness of  $ \ii_{\mu_{\alpha+1}}$ acting between  Bloch type spaces,  logarithmic Bloch
spaces among others.
\end{abstract}
\maketitle
\section{Introduction }\label{sec1}
\ \ \   Let $\mathbb{D}=\{z\in \mathbb{C}:\vert z\vert <1\}$ denote the open unit disk of the complex plane $\mathbb{C}$ and $H(\mathbb{D})$ denote the space of all analytic functions in $\mathbb{D}$.

 A positive continuous  function $\nu$ on  $[0,1)$ is called  normal
    if there exist  $0<a\leq b<\infty$ and $0\leq s_{0}<1$  such that
 $\displaystyle{\ \frac{\nu(s)}{(1-s^{2})^a}}$
   almost decreasing on $[s_{0},1)$ and
   $\displaystyle{\frac{\nu(s)}{(1-s^{2})^b}}$ almost increasing on $[s_{0},1)$.

 A function $g$  is almost increasing if there exists $C>0$ such that $r_{1}<r_{2}$ implies $g(r_{1})\leq Cg(r_{2})$. Almost decreasing functions are  defined in an analogous manner.

Functions such as
$$\nu(s)=(1-s^{2})^{t}\log^{\delta}\frac{e}{1-s^{2}}(t>0,\delta \in \mathbb{R})\  \mbox{and }\ \nu(s)=\left(\sum_{k=1}^{\infty}\frac{ks^{2k-2}}{\log^{3}(k+1)}\right)^{-1}$$
are normal functions.

In this paper,  we use $\mathcal {N}$ to denote the set of all normal functions  on $[0,1)$ and let $s_{0}=0$.  The letters $a$ and $b$ always represent the parameters in the definition of  normal function.

 Let $\nu \in \mathcal {N}$,   the normal weight Bloch space $\mathcal {B}_{\nu}$ consists of those functions $ f \in H(\mathbb{D})$ for which
$$||f||_{\mathcal {B}_{\nu}}=|f(0)|+\sup_{z\in \mathbb{D}}\nu(|z|)|f'(z)|<\infty.$$
In particular, if $\nu(|z|)=(1-|z|^{2})^{\gamma}(\gamma>0)$, then  $\bv$ is the Bloch type space $\mathcal {B}^{\gamma}$.  If $\nu(|z|)=(1-|z|^{2})\log^{-\beta}\frac{e}{1-|z|^{2}}(\beta \in \mathbb{R})$,  then $\bv$ is  just the  logarithmic Bloch space   $\B_{\log^{\beta}}$.

Let $\mu$  be a positive Borel measure on $[0, 1)$, $0\leq\gamma<\infty$ and $0 < s < \infty$. Then  $\mu$ is
a $\gamma$-logarithmic $s$-Carleson measure if there exists a positive constant $C$, such that
$$\mu([t,1))\log^{\gamma}\frac{e}{1-t} \leq C (1-t)^{s}, \ \ \mbox{for all}\  0\leq t<1 .$$
In particular,  $\mu$ is an  $s$-Carleson measure if $\gamma=0$. See \cite{H16} for more about logarithmic  Carleson measure.

Let $\mu$ be a finite Borel measure on $[0,1)$ and $n\in \mathbb{N}$. We use $\mu_{n}$ to denote the sequence of order $n$ of $\mu$, that is,  $\mu_{n}=\int_{[0,1)}t^{n}d\mu(t)$. Let $\mathcal {H}_{\mu}$ be the Hankel matrix $(\mu_{n,k})_{n,k\geq 0}$ with entries  $\mu_{n,k}=\mu_{n+k}$. The matrix $\mathcal {H}_{\mu}$ induces  an operator on $ H(\mathbb{D}) $ by its action on the Taylor coefficients $:a_{n}\rightarrow\displaystyle{\sum_{k=0}^{\infty}}\mu_{n,k}a_{k},\ \ n=0,1,2,\cdots $.
\vskip2mm
If $f(z)=\displaystyle{\sum_{n=0}^{\infty}}a_{n}z^{n} \in  H(\mathbb{D})$, the generalized Hilbert operator defined as follows:
$$
\mathcal {H}_{\mu}(f)(z)=\sum_{n=0}^{\infty}\left(\sum_{k=0}^{\infty}\mu_{n,k}a_{k}\right)z^{n},
$$
 It's known that the generalized Hilbert operator $\mathcal {H}_{\mu}$ is closely related to the integral operator
 $$\ii_{\mu}(f)(z)=\I \frac{f(t)}{1-tz}d\mu(t)$$
If  $\mu$ is the Lebesgue measure on $[0, 1)$, then  $\mathcal {H}_{\mu}$ and $ \ii_{\mu} $ reduce to the classic Hilbert operator $\mathcal {H}$ and $\ii$.

The action of the operators $\ii_{\mu}$ and $\mathcal {H}_{\mu}$ on distinct spaces of analytic functions
have been studied in a number of articles (see, e.g., \cite{H3,H11,H18,H21,H24,H22,H23})

 In this paper, we consider the generalized integral type Hilbert operator
 $$\ii_{\mu_{\alpha+1}}(f)(z)=\I \frac{f(t)}{(1-tz)^{\alpha+1}}d\mu(t),\ \ \ (\alpha>-1).$$
If $\alpha=0$, the operator $\ii_{\mu_{\alpha+1}}$ is just  $\ii_{\mu}$. The  integral type  operator $\ii_{\mu_{\alpha+1}}$ is closely related to the Hilbert type operator
$$
 \mathcal {H}^{\alpha}_{\mu}(f)(z)=\sum_{n=0}^{\infty}\frac{\Gamma(n+1+\alpha)}{\Gamma(n+1)\Gamma(\alpha+1)}\left(\sum_{k=0}^{\infty}\mu_{n,k}a_{k}\right)z^{n} , \ \ \ (\alpha>-1),
 $$
whenever the right hand side makes sense and defines an analytic function in $\mathbb{D}$.  $\mathcal {H}^{\alpha}_{\mu}$ can be regarded as the fractional derivative of  $\mathcal {H}_{\mu}$.  If $\alpha=1$, then $\mathcal {H}^{\alpha}_{\mu}$  called the Derivative-Hilbert operator which has been studied  in \cite{H19,d1} recently.

In \cite{H5}(see also \cite{H21}), the authors  have studied  the boundedness of  $\ii_{\mu}$ acting on $\mathcal {B}$. Li and Zhou  studied  the operator $\mathcal {H}_{\mu}$ from Bloch type spaces to the BMOA and the Bloch space in \cite{gi1}. Ye and Zhou  investigated $\ii_{\mu_{2}}$ acting  on $\mathcal {B}$ in \cite{H19}. The purpose of this article is to study the  boundedness(resp. compactness) of  $\ii_{\mu_{\alpha+1}}$ acting from   normal weight Bloch  space into another of  the same kind.  As consequences of our study, we
obtain several results about the boundedness of  $ \ii_{\mu_{\alpha+1}}$ acting between  Bloch type spaces,  logarithmic Bloch
spaces among others.

 Throughout the paper, the letter $C$ will denote an absolute constant whose value depends on the parameters
indicated in the parenthesis, and may change from one occurrence to another. We will use
the notation $``P\lesssim Q"$ if there exists a constant $C=C(\cdot) $ such that $`` P \leq CQ"$, and $`` P \gtrsim Q"$ is
understood in an analogous manner. In particular, if  $``P\lesssim Q"$  and $ ``P \gtrsim Q"$ , then we will write $``P\asymp Q"$.

\section{Preliminary Results}\label{sec2}

\ \ \  In \cite{i1}, a sequence $\{V_{n}\}$ was constructed in the following way:
Let $\psi$ be a $C^{\infty}$-function on $\mathbb{R}$ such that
(1)\ $\psi(s)=1$ for $s\leq 1$, (2)\ $\psi(s)=0 $ for $s\geq 2$,  (3)\ $\psi$ is decreasing and positive on the interval $(1,2)$.

Let $\varphi(s)=\psi(\frac{s}{2})-\psi(s)$, and let $v_{0}=1+z$, for $n\geq 1$,
$$V_{n}(z)=\sk \varphi(\frac{k}{2^{n-1}})z^{k}=\sum_{k=2^{n-1}}^{2^{n+1}-1}\varphi(\frac{k}{2^{n-1}})z^{k}.$$
The polynomials $V_{n}$ have the properties:
\\ (1)\ $\displaystyle{f(z)=\sn V_{n}\ast g (z)}$ , for $f\in \hd$;
\\ (2)\ $||V_{n}\ast f||_{p}\lesssim ||f||_{p}$, for $f\in H^{p}, p>0$;
\\ (3)\ $||V_{n}||_{p}\asymp 2^{n(1-\frac{1}{p})}$, for all $p>0$,
where $\ast$ denotes the Hadamard product and $||\cdot||_{p}$ denotes the norm of Hardy space $H^{p}$.

\begin{lemma}
Let $\nu \in \n$ and  $f\in H(\mathbb{D})$, then
$f\in \mathcal {B}_{\nu}$ if and only if  $$\sup_{n\geq0}\nu(1-2^{-n})2^{n}||V_{n}\ast f||_{\infty}<\infty.$$
Moreover, $$||f||_{\mathcal {B}_{\nu}}\asymp \sup_{n\geq0}\nu(1-2^{-n})2^{n}||V_{n}\ast f||_{\infty}.$$
\end{lemma}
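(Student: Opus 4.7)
The plan is to prove the equivalence by establishing the two one-sided inequalities; most of the work will lie in controlling $\|V_n\ast f\|_\infty$ from $\fbv$ (the harder direction) and in carrying out a dyadic sum with normal weights for the reverse.

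For the ``if'' direction, I would start from the reconstruction formula (property (1)) $f=\sn V_n\ast f$ and differentiate term by term to get $f'(z)=\sn (V_n\ast f)'(z)$. Because the spectrum of $V_n\ast f$ sits in $[2^{n-1},2^{n+1})$, I would write $(V_n\ast f)'(z)=z^{2^{n-1}-1}R_n(z)$ with $R_n$ a polynomial of degree at most $2^n$, and combine this with a Bernstein inequality for $(V_n\ast f)'$ to obtain the pointwise bound $|(V_n\ast f)'(z)|\lesssim 2^n|z|^{2^{n-1}-1}\|V_n\ast f\|_\infty$. Setting $A=\sup_n \nu(1-2^{-n})2^n\|V_n\ast f\|_\infty$ and $|z|=r=1-2^{-N}$, this reduces the task to estimating
$$\nu(r)|f'(z)|\lesssim A\sum_{n\ge 0}\frac{\nu(r)}{\nu(1-2^{-n})}\,r^{2^{n-1}-1}.$$
I would split the sum at $n=N$. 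For $n\le N$, the quotient $\nu(r)/\nu(1-2^{-n})\lesssim 2^{-(N-n)a}$ by the almost-decreasing property of $\nu(s)/(1-s^2)^a$, and $r^{2^{n-1}-1}\le 1$, giving a geometric series bounded by a constant. For $n>N$, $r^{2^{n-1}-1}\le \exp(-2^{n-1-N})$ decays super-exponentially while $\nu(r)/\nu(1-2^{-n})\lesssim 2^{(n-N)b}$ only grows polynomially in $2^{n-N}$, so the tail is also bounded. Together with $|f(0)|=|V_0\ast f(0)|\le \|V_0\ast f\|_\infty\lesssim A/\nu(0)$, this gives $\fbv\lesssim A$.

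For the converse, the goal is $2^n\nu(1-2^{-n})\|V_n\ast f\|_\infty\lesssim \fbv$. For $n\ge 1$, since $V_n$ has no constant term, I would use the representation
$$V_n\ast f(z)=\frac{1}{2\pi}\int_0^{2\pi} V_n(e^{-i\theta})\bigl(f(ze^{i\theta})-f(z)\bigr)\,d\theta,\qquad |z|<1.$$
Bounding $|f(ze^{i\theta})-f(z)|$ by integrating $f'$ along the circular arc of radius $|z|$ yields $|f(ze^{i\theta})-f(z)|\le |\theta|\,\fbv/\nu(|z|)$. The second ingredient is the kernel estimate $\int_0^{2\pi}|V_n(e^{i\theta})||\theta|\,d\theta\lesssim 2^{-n}$, which follows from the standard Schwartz-type bound $|V_n(e^{i\theta})|\lesssim 2^n(1+2^n|\theta|)^{-M}$ derived by repeated summation by parts using the smoothness of $\varphi$. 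Combining these gives $|V_n\ast f(z)|\lesssim 2^{-n}\fbv/\nu(|z|)$ for $|z|<1$. Since $V_n\ast f$ is a polynomial of degree at most $2^{n+1}-1$, a Bernstein--Markov argument (if $|p(z_0)|=\|p\|_\infty$ on $|z|=1$, then the derivative bound forces $|p|\ge \tfrac12\|p\|_\infty$ on a disc of radius $\asymp 2^{-n}$ around $z_0$) shows $\|V_n\ast f\|_\infty\lesssim \sup_{|z|=1-c2^{-n}}|V_n\ast f(z)|$, and the normality of $\nu$ gives $\nu(1-c2^{-n})\asymp \nu(1-2^{-n})$, concluding the estimate.

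The main obstacle I anticipate is the pointwise kernel bound $|V_n(e^{i\theta})|\lesssim 2^n(1+2^n|\theta|)^{-M}$, which requires a careful integration-by-parts exploiting the $C^\infty$ smoothness of $\psi$; the Bernstein--Markov transfer from the inner circle $|z|=1-c2^{-n}$ to $|z|=1$ must also be done without losing a factor of $2^n$, and the dyadic sum bound in the ``if'' part hinges on having both the upper bound $b$ and the lower bound $a$ from the normal-weight definition available to tame the two tails.
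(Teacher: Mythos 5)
Your proposal is correct, and it is essentially the standard smooth-polynomial-block argument that the paper itself defers to (the paper gives no proof, only citing the analogous Theorem 3.1 of its reference [i2]): spectral localization plus Bernstein's inequality and a dyadic splitting controlled by the two normality exponents $a,b$ (exactly the content of the paper's Lemma 2.3) for the direction $\sup_n \nu(1-2^{-n})2^{n}\|V_n\ast f\|_\infty \lesssim$ bound $\Rightarrow f\in\mathcal{B}_{\nu}$, and the zero-mean kernel estimate $\int_{-\pi}^{\pi}|V_n(e^{i\theta})||\theta|\,d\theta\lesssim 2^{-n}$ together with the Bernstein--Markov transfer from $|z|=1-c2^{-n}$ to $|z|=1$ (with $\nu(1-c2^{-n})\asymp\nu(1-2^{-n})$ by normality) for the converse. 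All the key technical ingredients you flag (the bound $|V_n(e^{i\theta})|\lesssim 2^{n}(1+2^{n}|\theta|)^{-M}$ via repeated summation by parts, and the loss-free transfer between circles) are standard and carried out correctly in your outline, so no gap remains.
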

The proof of this Lemma  is  similar to that Theorem  3.1 in \cite{i2},  we leave it to the readers.

\begin{lemma}\label{lem2.2}
Let $\nu\in \mathcal {N}$ and
$$
g(\zeta)=1+\sum_{s=1}^{\infty}2^{s}\zeta^{n_{s}} \ \  (\zeta \in \mathbb{D}),
$$
where $n_{s}$ is the integer part of $(1-r_{s})^{-1}$, $r_{0}=0$, $\nu(r_{s})=2^{-s}(s=1,2,\cdots)$. Then
 $g(r)$ is strictly increasing on $[0, 1)$ and there exist two positive constants $N_{1}$ and $N_{2}$ such that
$$
\inf_{[0,1)}\nu(r)g(r)=N_{1}>0, \ \ \sup_{\zeta\in \mathbb{D}}\nu(|\zeta|)|g(\zeta)|=N_{2}< +\infty.
$$
\end{lemma}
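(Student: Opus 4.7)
The plan is to partition $[0,1)$ into the dyadic-type intervals $I_j := [r_{j-1}, r_j]$ and to show that on each $I_j$ both $\nu$ and $g$ have matched size, namely $\nu(r)\asymp 2^{-j}$ and $g(r) \asymp 2^j$, so that their product is comparable to $1$ uniformly. Strict monotonicity of $g(r)$ on $[0,1)$ is immediate from the positivity of the Taylor coefficients: for $0\le r'<r<1$, $g(r)-g(r')=\sum_{s\ge 1}2^s(r^{n_s}-r'^{n_s})>0$. The complex upper bound $\sup_{\zeta\in\dd}\nu(|\zeta|)|g(\zeta)|<\infty$ reduces at once to the real case via $|g(\zeta)|\le g(|\zeta|)$.

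The first substantive step is to extract from the normality of $\nu$ a quantitative law for $1-r_s$. Applying the almost-monotonicity of $\nu(r)(1-r^2)^{-a}$ and $\nu(r)(1-r^2)^{-b}$ at the pair of points $r_j<r_s$ where $\nu$ takes the values $2^{-j}$ and $2^{-s}$ respectively, I would derive for $1\le j\le s$,
$$c\,2^{(j-s)/a}\;\le\;\frac{1-r_s}{1-r_j}\;\le\;C\,2^{(j-s)/b}.$$
The same two inequalities applied to $r\in I_j$ and to the endpoints $r_{j-1},r_j$ yield $\nu(r)\asymp 2^{-j}$ uniformly on $I_j$.

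For the upper bound on $g$, I split the series at $s=j$ and write
$$g(r)\;=\;1+\sum_{s=1}^{j}2^s r^{n_s}+\sum_{s>j}2^s r^{n_s}.$$
The first sum is trivially at most $2^{j+1}$. For the tail I use $r\le r_j$ together with $\log r_j\le -(1-r_j)$ to get $r^{n_s}\le \exp\!\bigl(-n_s(1-r_j)\bigr)$; combined with $n_s\asymp(1-r_s)^{-1}$ and the law above, $n_s(1-r_j)\gtrsim 2^{(s-j)/b}$. Setting $t=s-j$ one is reduced to
$$2^j\sum_{t\ge 1}2^t\exp\bigl(-c\,2^{t/b}\bigr),$$
which is a convergent constant times $2^j$. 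Hence $g(r)\lesssim 2^j$ and $\nu(r)|g(r)|\lesssim 1$. For the lower bound, I keep only the single term $s=j$: $g(r)\ge 2^j r^{n_j}\ge 2^j r_{j-1}^{n_j}$. Since $n_j(1-r_{j-1})\asymp (1-r_{j-1})/(1-r_j)$ is uniformly bounded by the same law, and $r_{j-1}\ge r_1>0$ for $j\ge 2$, the factor $r_{j-1}^{n_j}$ is bounded below by a positive constant, giving $\nu(r)g(r)\gtrsim 1$. The remaining case $j=1$ is trivial: $g(r)\ge 1$ and $\nu$ is bounded below on the compact set $[0,r_1]$ by positivity and continuity.

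The main obstacle will be the normality estimate $(1-r_s)/(1-r_j)\lesssim 2^{(j-s)/b}$, because it is this super-exponential gap between the scales $1-r_j$ and $1-r_s$ that converts the geometric factor $2^s$ in the series into a summable expression $2^t\exp(-c\,2^{t/b})$. Everything else in the argument is essentially bookkeeping around that single quantitative input extracted from the definition of normal weight.
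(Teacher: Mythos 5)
Your argument is correct, but it cannot be compared line-by-line with the paper because the paper does not prove Lemma 2.2 at all: it simply cites Theorem 1 of \cite{s13}. What you supply is a self-contained proof along the standard route for such test functions: decompose $[0,1)$ into the blocks $I_j=[r_{j-1},r_j]$, extract from the two normality exponents the two-sided law $c\,2^{(j-s)/a}\le (1-r_s)/(1-r_j)\le C\,2^{(j-s)/b}$ for $j\le s$, deduce $\nu\asymp 2^{-j}$ on $I_j$, bound the head of the series by $2^{j+1}$ and the tail by $2^j\sum_{t\ge1}2^t\exp(-c\,2^{t/b})$, and get the lower bound from the single term $s=j$. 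This is almost certainly the same mechanism as in the cited source; the citation buys brevity, while your version makes the paper self-contained and makes visible exactly where each half of normality enters (the $b$-exponent controls the tail, the $a$-exponent controls the single-term lower bound and the comparability $\nu\asymp 2^{-j}$). Two small points deserve a sentence in a polished write-up, though neither is a genuine gap: (i) you implicitly use that the $r_s$ are increasing with $r_s\to1$, so that the $I_j$ cover $[0,1)$; this is legitimate because $\nu(r)\le C\nu(0)(1-r^2)^a\to0$ by normality, so one may take $r_s=\max\{r:\nu(r)=2^{-s}\}$, which is strictly increasing and tends to $1$; (ii) your claim $r_{j-1}\ge r_1>0$ can fail (nothing prevents $r_1=0$, e.g.\ if $\nu(0)=\tfrac12$ and $\nu$ is decreasing), but this only affects finitely many blocks, on which the trivial bound $\nu(r)g(r)\ge\min_{[0,r_J]}\nu>0$ (using $g\ge1$ and the continuity and positivity of $\nu$) suffices, exactly as you already argue for $j=1$; your quantitative estimate is only needed for large $j$, where $r_{j-1}\ge\tfrac12$.
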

This result is originated from Theorem 1 in \cite{s13}.

\begin{lemma}\label{lem2.3}
If $\nu \in \mathcal {N}$, then
$$\frac{\nu(|z|)}{\nu(|w|)}\lesssim \left(\frac{1-|z|^{2}}{1-|w|^{2}}\right)^{a}+\left(\frac{1-|z|^{2}}{1-|w|^{2}}\right)^{b} \ \ \mbox{for all }\ z,w\in \dd.$$
\end{lemma}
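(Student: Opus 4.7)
The inequality is a standard two-sided comparison for normal weights, and the natural approach is a simple dichotomy on whether $|z|\geq|w|$ or $|z|<|w|$, invoking the two monotonicity halves of the definition of a normal function. Since $s_{0}=0$ by the convention stated earlier, the almost-monotonicity hypotheses are usable for every pair of points in $\mathbb{D}$ without having to handle a boundary region separately.

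First I would handle the case $|w|\leq|z|$. Here $1-|z|^{2}\leq 1-|w|^{2}$, so the bound I want is the first summand $\bigl((1-|z|^{2})/(1-|w|^{2})\bigr)^{a}$. To produce it, I invoke the almost-decreasing property of $\nu(s)/(1-s^{2})^{a}$ with $r_{1}=|w|$ and $r_{2}=|z|$, which gives
\begin{equation*}
\frac{\nu(|z|)}{(1-|z|^{2})^{a}}\;\lesssim\;\frac{\nu(|w|)}{(1-|w|^{2})^{a}},
\end{equation*}
and rearranging yields $\nu(|z|)/\nu(|w|)\lesssim \bigl((1-|z|^{2})/(1-|w|^{2})\bigr)^{a}$.

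Symmetrically, in the case $|z|<|w|$ I would apply the almost-increasing property of $\nu(s)/(1-s^{2})^{b}$ with $r_{1}=|z|$ and $r_{2}=|w|$, obtaining $\nu(|z|)/\nu(|w|)\lesssim \bigl((1-|z|^{2})/(1-|w|^{2})\bigr)^{b}$, which is now the relevant summand since $1-|z|^{2}>1-|w|^{2}$. Combining both cases by noting that one of the two terms always dominates $\nu(|z|)/\nu(|w|)$, the claimed bound follows from the trivial estimate $\min\{A,B\}\leq A+B$.

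There is really no serious obstacle here: the entire argument is the observation that the two monotonicity conditions in the definition of a normal function control the ratio $\nu(|z|)/\nu(|w|)$ in the two complementary regimes. The only point that needs a small amount of care is making sure the constant hidden in $\lesssim$ absorbs the almost-monotonicity constants $C$ from the definitions, and that the argument is legitimate at the boundary case $|z|=|w|$, where both sides collapse trivially.
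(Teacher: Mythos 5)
Your argument is correct: the two cases $|w|\leq|z|$ and $|z|<|w|$, handled respectively by the almost-decreasing property of $\nu(s)/(1-s^{2})^{a}$ and the almost-increasing property of $\nu(s)/(1-s^{2})^{b}$ (legitimate on all of $[0,1)$ since the paper takes $s_{0}=0$), give exactly the claimed bound, with the trivial case $|z|=|w|$ posing no issue. The paper itself offers no proof here — it only cites Lemma 2.2 of \cite{L24} — and your direct verification is the standard argument behind that citation; the only cosmetic slip is the phrase about $\min\{A,B\}$, since in each case it is simply the relevant summand (in fact the larger one) that dominates the ratio, and each summand is trivially at most the sum.
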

This result comes  from Lemma 2.2 in \cite{L24}.

\begin{lemma}
Let $\nu \in \n$, $0<\delta<\frac{1}{e^{2}}$, then
$$\int_{e}^{\infty}\frac{e^{-\delta t}dt}{t\nu(1-\frac{1}{t})}\lesssim \frac{1}{\nu(1-\delta)}.$$
\end{lemma}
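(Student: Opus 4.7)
The plan is to use Lemma 2.3 to extract a $\nu(1-\delta)$ factor from the denominator, and then reduce the remaining integral to a gamma-like integral via rescaling.

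First I would apply Lemma 2.3 with the points $|z|=1-\delta$ and $|w|=1-1/t$. Since $1-(1-\delta)^{2}=\delta(2-\delta)\asymp \delta$ (using $\delta<1/e^{2}$) and $1-(1-1/t)^{2}=(2-1/t)/t\asymp 1/t$ (using $t\geq e$), the lemma yields
\begin{equation*}
\frac{\nu(1-\delta)}{\nu(1-\tfrac{1}{t})}\lesssim (\delta t)^{a}+(\delta t)^{b},
\end{equation*}
and hence $\frac{1}{\nu(1-1/t)}\lesssim \frac{(\delta t)^{a}+(\delta t)^{b}}{\nu(1-\delta)}$ uniformly in $t\geq e$.

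Plugging this pointwise estimate into the integral gives
\begin{equation*}
\int_{e}^{\infty}\frac{e^{-\delta t}\,dt}{t\,\nu(1-\tfrac{1}{t})}\lesssim \frac{1}{\nu(1-\delta)}\int_{e}^{\infty}\frac{e^{-\delta t}\bigl((\delta t)^{a}+(\delta t)^{b}\bigr)}{t}\,dt.
\end{equation*}
Then I would make the substitution $u=\delta t$, under which $dt/t=du/u$ and the domain becomes $[e\delta,\infty)\subset(0,\infty)$. The remaining integral becomes
\begin{equation*}
\int_{e\delta}^{\infty}e^{-u}\bigl(u^{a-1}+u^{b-1}\bigr)\,du\leq \Gamma(a)+\Gamma(b),
\end{equation*}
which is a finite constant depending only on $a,b$ (the normal-function parameters), so the claim follows.

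There is no real obstacle here — the only point that needs a little care is verifying the two asymptotic equivalences $1-(1-\delta)^{2}\asymp \delta$ and $1-(1-1/t)^{2}\asymp 1/t$, which is precisely why the hypotheses $\delta<1/e^{2}$ and $t\geq e$ are imposed; they guarantee that the ratio $(\delta t)^{a}+(\delta t)^{b}$ coming out of Lemma 2.3 has the correct scaling against $e^{-\delta t}$ to produce a convergent gamma integral after rescaling.
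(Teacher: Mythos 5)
Your proof is correct, and it takes a mildly different route from the paper's. The paper does not invoke Lemma 2.3 at all: it splits the integral at $t=1/\delta$, drops the factor $e^{-\delta t}$ on $[e,1/\delta]$ and uses the almost-decreasing half of normality (the exponent $a$) to get $\delta^{a}\int_{e}^{1/\delta}t^{a-1}dt\lesssim 1$, then on $[1/\delta,\infty)$ uses the almost-increasing half (the exponent $b$) and the change of variable $s=\delta t$ to land on $\int_{1}^{\infty}e^{-s}s^{b-1}ds$. You instead use the two-sided comparison of Lemma 2.3 uniformly on the whole range $t\geq e$, which packages both halves of normality into the single bound $\nu(1-\delta)/\nu(1-\tfrac{1}{t})\lesssim(\delta t)^{a}+(\delta t)^{b}$, and then one substitution $u=\delta t$ reduces everything to $\Gamma(a)+\Gamma(b)$. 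Your version is more uniform and avoids the case split (at the cost of leaning on Lemma 2.3, which is legitimately available since it is stated earlier in the paper); the paper's version is more self-contained, working directly from the definition of a normal function. One small quibble with your closing remark: the hypotheses $\delta<1/e^{2}$ and $t\geq e$ only ensure that $1-\delta$ and $1-1/t$ lie in $[0,1)$ and that $1-(1-\delta)^{2}\asymp\delta$ and $1-(1-1/t)^{2}\asymp 1/t$; the convergence of the gamma integral near $u=0$ comes from $a>0$ in the definition of normality, not from those hypotheses.
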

\begin{proof}
$$\int_{e}^{\infty}\frac{e^{-\delta t}dt}{t\nu(1-\frac{1}{t})}=\int_{e}^{\frac{1}{\delta}}\frac{e^{-\delta t}dt}{t\nu(1-\frac{1}{t})}+\int_{\frac{1}{\delta}}^{\infty}\frac{e^{-\delta t}dt}{t\nu(1-\frac{1}{t})}=I_{1}+I_{2}.$$
By the definition of normal function, we have
$$I_{1}\leq \int_{e}^{\frac{1}{\delta}} \frac{dt}{t\nu(1-\frac{1}{t})} \lesssim \frac{\delta^{a}}{\nu(1-\delta)} \int_{e}^{\frac{1}{\delta}}t^{a-1}dt
\lesssim  \frac{1}{\nu(1-\delta)}.$$
If $t>\frac{1}{\delta}$,  then $1-\frac{1}{t}>1-\delta$. The definition of normal function shows that
$$\frac{\nu(1-\delta)}{[1-(1-\delta)]^{b}} \lesssim \frac{\nu(1-\frac{1}{t})}{[1-(1-\frac{1}{t})]^{b}}.$$
Hence, we have
 \[ \begin{split}
I_{2}&=\int_{\frac{1}{\delta}}^{\infty}\frac{\nu(1-\delta)}{\nu(1-\frac{1}{t})}\frac{e^{-\delta t  }dt}{t\nu(1-\delta)}\\
&\lesssim \int_{\frac{1}{\delta}}^{\infty} \frac{\delta^{b}t^{b-1}e^{-\delta t}}{\nu(1-\delta)}dt =\frac{1}{\nu(1-\delta)}\int_{1}^{\infty}e^{-s}s^{b-1}ds\\
&\lesssim \frac{1}{\nu(1-\delta)}.
  \end{split} \]
  The proof is complete.
  \end{proof}

\begin{lemma}
Let $\mu$ be a positive Borel measure on $ [0, 1)$, $\beta>0$, $\gamma>0$. Let $\tau $ be the Borel measure on $[0, 1)$ defined by
$$d\tau(t)=\frac{d\mu(t)}{(1-t)^{\gamma}}.$$
Then, the following two conditions are equivalent.
\\(a)\ $\tau$ is a $\beta$-Carleson measure.
\\ (b)\ $\mu$ is a $\beta+\gamma$-Carleson measure.

\end{lemma}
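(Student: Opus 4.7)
The statement is an equivalence, so I would prove the two directions separately.

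The easy direction is $(a) \Longrightarrow (b)$. Here I start from $d\mu(t) = (1-t)^{\gamma}\, d\tau(t)$ and just bound $(1-s)^{\gamma}$ by its largest value on $[t,1)$:
$$\mu([t,1)) = \int_{[t,1)} (1-s)^{\gamma}\, d\tau(s) \le (1-t)^{\gamma}\, \tau([t,1)) \le C(1-t)^{\beta+\gamma},$$
using the $\beta$-Carleson hypothesis on $\tau$ at the last step. This gives the $\beta+\gamma$-Carleson condition for $\mu$ immediately, with no measure-theoretic subtlety.

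The direction $(b) \Longrightarrow (a)$ is the one that requires a little work, because the weight $(1-s)^{-\gamma}$ in $d\tau(t) = (1-t)^{-\gamma} d\mu(t)$ is unbounded as $s \to 1^-$, so a crude pointwise bound is not enough. The plan is to perform a dyadic decomposition of the interval $[t,1)$. For each $n \ge 0$, set $I_n = [1 - 2^{-n}(1-t),\ 1 - 2^{-n-1}(1-t))$, so that $[t,1) = \bigcup_{n\ge 0} I_n$ (disjointly). On $I_n$ one has $1-s \ge 2^{-n-1}(1-t)$, hence $(1-s)^{-\gamma} \le 2^{(n+1)\gamma}(1-t)^{-\gamma}$. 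Then
$$\tau([t,1)) = \sum_{n=0}^{\infty} \int_{I_n} (1-s)^{-\gamma}\, d\mu(s) \le \sum_{n=0}^{\infty} 2^{(n+1)\gamma}(1-t)^{-\gamma}\, \mu\!\bigl([1-2^{-n}(1-t),1)\bigr).$$
Applying the $(\beta+\gamma)$-Carleson hypothesis on $\mu$ gives $\mu([1-2^{-n}(1-t),1)) \le C(2^{-n}(1-t))^{\beta+\gamma}$, and the geometric sum $\sum_n 2^{(n+1)\gamma - n(\beta+\gamma)} = 2^\gamma\sum_n 2^{-n\beta}$ converges since $\beta>0$. This produces $\tau([t,1)) \lesssim (1-t)^{\beta}$, as required.

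The only thing I would double-check is the convergence condition $\beta>0$ in the second direction (which is assumed in the hypothesis) and that the dyadic pieces cover $[t,1)$ up to a single point at $1$, which is harmless since $\mu$ is a measure on $[0,1)$. There is no real obstacle; the content of the lemma is entirely captured by the dyadic estimate together with the trivial pointwise estimate in the other direction.
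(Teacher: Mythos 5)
Your proof is correct, and the harder direction takes a genuinely different route from the paper. The direction $(a)\Rightarrow(b)$ is essentially the paper's argument: both of you bound the weight on $[t,1)$ by its value at $t$ (you write $(1-s)^{\gamma}\le(1-t)^{\gamma}$, the paper uses that $x\mapsto (1-x)^{-\gamma}$ is increasing), so there is nothing to compare there. For $(b)\Rightarrow(a)$ the paper integrates by parts, writing $\tau([t,1))=\frac{\mu([t,1))}{(1-t)^{\gamma}}+\gamma\int_t^1\frac{\mu([x,1))}{(1-x)^{\gamma+1}}\,dx$ after checking that the boundary term $\lim_{x\to1}\frac{\mu([x,1))}{(1-x)^{\gamma}}$ vanishes (which itself uses the Carleson bound and $\beta>0$), and then inserts $\mu([x,1))\lesssim(1-x)^{\beta+\gamma}$ to obtain $(1-t)^{\beta}+\int_t^1(1-x)^{\beta-1}\,dx\lesssim(1-t)^{\beta}$. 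Your dyadic decomposition of $[t,1)$ into the pieces $I_n=[1-2^{-n}(1-t),\,1-2^{-n-1}(1-t))$ replaces the integration by parts by a geometric series: it uses only the crude bound $(1-s)^{-\gamma}\le 2^{(n+1)\gamma}(1-t)^{-\gamma}$ on $I_n$, monotonicity of $\mu([x,1))$, and summability of $\sum_n 2^{-n\beta}$. This avoids justifying the Riemann--Stieltjes integration by parts and the boundary term, at the cost of a little more bookkeeping; both arguments invoke $\beta>0$ at exactly the corresponding place (convergence of $\int_t^1(1-x)^{\beta-1}\,dx$ versus of $\sum_n 2^{-n\beta}$). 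Your covering of $[t,1)$ by the $I_n$ is in fact exact (the point $1$ is not in the space at all), so there is no gap in your argument.
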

\begin{proof}

 $(a) \Rightarrow (b)$. Assume (a).  Then there exists a positive constant $C>0$ such that
$$\int_{t}^{1}\frac{d\mu(r)}{(1-r)^{\gamma}}\leq C(1-t)^{\beta},\ \ t\in[0,1).$$
Using this and the fact that the function $x \rightarrow \frac{1}{(1-x)^{\gamma}}$ is increasing in $[0, 1)$, we
obtain
$$\frac{\mu([t,1))}{(1-t)^{\gamma}}\leq \int_{t}^{1}\frac{d\mu(r)}{(1-r)^{\gamma}}\leq C(1-t)^{\beta},\ \ t\in[0,1).$$
This shows that  $\mu$ is a $\beta+\gamma$-Carleson measure.

$(b) \Rightarrow (a)$. Assume (b). Then there exists a positive constant $C>0$ such that
$$\mu(t)\leq C(1-t)^{\beta+\gamma},\ \ t\in[0,1).$$
For $0<x<1$,
let $h(x)=\mu([0,x))-\mu([0,1))=-\mu([x,1))$. Integrating by
parts and using the inequality  above , we obtain
\[ \begin{split}
&\tau([t,1))=\int_{t}^{1}\frac{d\mu(x)}{(1-x)^{\gamma}}\\
&= \frac{1}{(1-t)^{\gamma}}\mu([t,1))-\lim_{x\rightarrow 1}\frac{1}{(1-x)^{\gamma}}\mu([x,1))+\gamma\int_{t}^{1}\frac{\mu([x,1))}{(1-x)^{\gamma+1}}dx\\
&= \frac{1}{(1-t)^{\gamma}}\mu([t,1))+\gamma\int_{t}^{1}\frac{\mu([x,1))}{(1-x)^{\gamma+1}}dx\\
& \lesssim (1-t)^{\beta}+\int_{t}^{1} (1-x)^{\beta-1}dx\lesssim  (1-t)^{\beta}.
  \end{split} \]
  Thus,  $\tau$ is an $\beta$-Carleson measure.
\end{proof}

\begin{lemma}
Let $\omega, \nu \in \mathcal {N}$. If $T$ is a bounded operator from $\bw$ into $\bv$, then $T$  is compact operator from $\bw$ into $\bv$ if and only if  for any bounded sequence $\{h_{n}\}$ in $\bw$ which converges to $0$ uniformly on every compact subset of $\mathbb{D}$, we have
$\lim_{n\rightarrow \infty}||T(h_{n})|| _{\bv}=0.$
\end{lemma}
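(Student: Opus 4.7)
The plan is to prove both implications by the standard normal-family argument for operators between Banach spaces of analytic functions with continuous point evaluation. I will use that $\bw$ and $\bv$ have continuous point evaluation (so norm-bounded subsets are normal families in $H(\mathbb{D})$) and that the Bloch-type norms are lower semicontinuous with respect to uniform convergence on compacta.

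For the reverse direction, assume the sequential condition and let $\{f_n\}$ be a bounded sequence in $\bw$. Continuity of point evaluation gives uniform boundedness on each compact subset of $\mathbb{D}$, so Montel's theorem yields a subsequence $\{f_{n_k}\}$ converging uniformly on compacta to some $f\in H(\mathbb{D})$, with $f_{n_k}'\to f'$ locally uniformly by Weierstrass' theorem. Passing to the $\liminf$ in the pointwise bound $\omega(|z|)|f_{n_k}'(z)|\leq \|f_{n_k}\|_{\bw}$ and then taking the supremum in $z$ gives $\|f\|_{\bw}\leq \liminf_k \|f_{n_k}\|_{\bw}<\infty$, so $f\in \bw$. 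Setting $h_k=f_{n_k}-f$, the sequence $\{h_k\}$ is bounded in $\bw$ and tends to $0$ uniformly on compacta, so by hypothesis $\|T(h_k)\|_{\bv}\to 0$, i.e., $T(f_{n_k})\to T(f)$ in $\bv$. This gives compactness of $T$.

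For the forward direction, suppose $T$ is compact and let $\{h_n\}$ be bounded in $\bw$ with $h_n\to 0$ uniformly on compacta. I argue by contradiction: if $\|T(h_n)\|_{\bv}\not\to 0$, pass to a subsequence with $\|T(h_n)\|_{\bv}\geq \delta>0$ and then use compactness to extract a further subsequence with $T(h_n)\to g$ in the $\bv$-norm. Continuity of point evaluation on $\bv$ forces $T(h_n)(z)\to g(z)$ for each $z\in \mathbb{D}$. To reach a contradiction one needs $T(h_n)(z)\to 0$ pointwise as well, and this is the main obstacle: for a generic bounded $T$ it is a nontrivial property, but in the present application to the integral operator $\ii_{\mu_{\alpha+1}}$ it follows immediately by the dominated convergence theorem applied inside the integral defining $\ii_{\mu_{\alpha+1}}(h_n)(z)$, using as the dominating factor the $\bw$-bound $|h_n(t)|\lesssim 1+\int_0^{|t|}\frac{dr}{\omega(r)}$ (inherited from $\sup_n \|h_n\|_{\bw}<\infty$). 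This forces $g\equiv 0$, contradicting $\|g\|_{\bv}\geq \delta$, and completes the proof.
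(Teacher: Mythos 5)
Your proof of the sufficiency direction (the sequential condition implies compactness) is complete and is exactly the standard argument the paper intends by its reference to Proposition 3.11 of Cowen--MacCluer: point evaluations are bounded on $\bw$ since $|f(z)|\lesssim (1+\widetilde{\omega}(|z|))\fbw$, Montel and Weierstrass give a locally uniformly convergent subsequence, lower semicontinuity of the seminorm puts the limit $f$ in $\bw$, and applying the hypothesis to $f_{n_k}-f$ finishes. This is also the only direction the paper actually uses (in the proof of Theorem 4.3), so that part of your work fully covers the application.

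For the necessity direction you have correctly isolated the crux, namely that one must know $T(h_n)(z)\to 0$ pointwise (equivalently, that $T$ is continuous from norm-bounded sets with the compact-open topology into pointwise convergence). As written, you verify this only for $T=\ii_{\mu_{\alpha+1}}$, via dominated convergence with majorant $(1+\widetilde{\omega}(t))/|1-tz|^{\alpha+1}$, which is legitimate under (4.1); so your argument does not prove the lemma for an arbitrary bounded $T$. But no argument could: for a completely general bounded $T$ the ``only if'' part is false. Take $\omega(s)=\nu(s)=1-s^{2}$, let $L(z)=\sum_{k}z^{2^{k}}$, which lies in $\B$ but not in the little Bloch space $\B_{0}$ (functions with $(1-|z|^{2})|f'(z)|\to 0$ as $|z|\to 1$), and let $h_{n}(z)=\sum_{k\geq n}z^{2^{k}}$ be its tails. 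These are uniformly bounded in $\B$ (lacunary series with bounded coefficients), tend to $0$ uniformly on compact subsets, and all have the same nonzero image in $\B/\B_{0}$ since they differ from $L$ by polynomials. Choosing by Hahn--Banach a functional $\Lambda$ on $\B$ vanishing on $\B_{0}$ with $\Lambda(L)\neq 0$, and setting $T(f)=\Lambda(f)\,g_{0}$ for a fixed $0\neq g_{0}\in\bv$, gives a rank-one (hence compact) operator with $\|T(h_{n})\|_{\bv}=|\Lambda(L)|\,\|g_{0}\|_{\bv}\not\to 0$. So the lemma as stated needs either the extra continuity hypothesis you identified (as in Tjani-type compactness lemmas) or to be read for the concrete operators $\ii_{\mu_{\alpha+1}}$ and $\widetilde{\ii}_{\mu_{\alpha+1}}$; your dominated-convergence verification is the right repair in that setting, and it is precisely the feature that makes the cited Proposition 3.11 (stated for composition operators, where pointwise convergence of the images is automatic) go through.
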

The proof is similar to that of Proposition 3.11 in \cite{hb1}, we omit the details.

\section{Nonnegative Coefficients of  normal weight Bloch functions}

\ \ \  First,  we give a characterization of the functions   $f\in \hd $ whose sequence of Taylor coefficients is
non-negative which belongs to $\bv$.

\begin{thm}
Let  $\nu \in \mathcal {N}$ and $f\in \hd$, $f(z)=\sn a_{n}z^{n}$, $a_{n}\geq 0 $ for all $n\geq 0$. Then $f\in \bv$ if and only if
$$S(f):=\sup_{n\geq 1}\nu(1-\frac{1}{n})\sum_{k=1}^{n}ka_{k}<\infty.$$
Moreover,
$$\fbv \asymp S(f) + a_{0} .$$
\end{thm}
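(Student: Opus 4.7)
The plan is to exploit the structural fact that when $a_n \geq 0$ one has $|f'(z)| \leq f'(|z|)$ for every $z \in \mathbb{D}$, so that
$$\sup_{z \in \mathbb{D}} \nu(|z|)|f'(z)| = \sup_{r \in [0,1)} \nu(r) f'(r).$$
Since also $|f(0)| = a_0$, the theorem reduces to showing $\sup_{r \in [0,1)} \nu(r) f'(r) \asymp S(f)$.

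For the \emph{necessity} direction I would fix $n \geq 1$ and set $r_n = 1 - 1/n$. Truncating the non-negative series $f'(r_n) = \sum_{k \geq 1} k a_k r_n^{k-1}$ at $k = n$ and using $r_n^{n-1} \geq 1/e$ gives $\sum_{k=1}^{n} k a_k \lesssim f'(r_n)$. Multiplying by $\nu(r_n)$ yields $\nu(r_n)\sum_{k=1}^{n} k a_k \lesssim \nu(r_n) f'(r_n) \leq \fbv$, hence $S(f) \lesssim \fbv$.

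For the \emph{sufficiency} direction I would prove $\nu(r) f'(r) \lesssim S(f)$ uniformly in $r$. Because $f'$ is non-decreasing on $[0,1)$ and $\nu$ is bounded on $[0, 1/2]$, the range $r \in [0, 1/2]$ follows from the single bound at $r = 1/2$, so the real task is $r \in [1/2, 1)$. Writing $r = 1 - 1/N$ with integer $N \geq 2$ (a general $r$ is reached by choosing $N \asymp 1/(1-r)$ and using $\nu(r) \asymp \nu(1 - 1/N)$ from Lemma 2.3), I would decompose dyadically
$$f'(r) = \sum_{k=1}^{N} k a_k r^{k-1} + \sum_{j=0}^{\infty}\sum_{k = 2^j N + 1}^{2^{j+1}N} k a_k r^{k-1}.$$
The first piece is bounded by $A_N := \sum_{k=1}^{N} k a_k \leq S(f)/\nu(r)$. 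For the $j$-th block, $r^{2^j N} \leq e^{-2^j}$ yields an upper bound of $e^{-2^j}\,A_{2^{j+1}N} \leq e^{-2^j}\,S(f)/\nu(1 - 1/(2^{j+1}N))$, and Lemma 2.3 (applied with $|z| = r$, $|w| = 1 - 1/(2^{j+1}N)$) lets me replace $1/\nu(1 - 1/(2^{j+1}N))$ by at most a constant times $2^{(j+1)b}/\nu(r)$. Summing $e^{-2^j}\,2^{(j+1)b}$ over $j$ converges, so the tail contributes $\lesssim S(f)/\nu(r)$ as well, and the two pieces combine to give $\nu(r) f'(r) \lesssim S(f)$.

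The main obstacle lies in this tail estimate: the hypothesis $S(f) < \infty$ furnishes only the weighted partial-sum information $\nu(1 - 1/M) A_M \leq S(f)$, whereas $f'(r)$ is an infinite series. The dyadic decomposition transfers control to partial sums at scales $M_j = 2^{j+1}N$, but each such transfer costs a factor $1/\nu(1 - 1/M_j)$ that grows with $j$; the normality of $\nu$ (via Lemma 2.3) restricts this growth to $2^{(j+1)b}$, which is then crushed by the super-exponential decay $r^{2^j N} \leq e^{-2^j}$. Balancing these two opposing factors is the technical crux of the sufficiency proof, and is exactly where the flexibility of normal weights (as opposed to the pure power weights $(1-|z|^{2})^{\gamma}$) enters in a genuine way.
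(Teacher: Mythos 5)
Your proof is correct and follows essentially the same route as the paper: the necessity direction is the identical truncation at $k=n$ with $(1-1/n)^{n-1}\gtrsim 1$, and the sufficiency is the paper's dyadic-block argument, in which the partial-sum bounds from $S(f)$ are transferred to $f'(r)$ block by block and the growth of $1/\nu$ (controlled by Lemma 2.3 through the exponent $b$) is absorbed by the super-exponential decay $r^{2^jN}\lesssim e^{-2^j}$. Your anchoring of the blocks at scales $2^jN$ with $N\asymp 1/(1-r)$ is only a cosmetic reparameterization of the paper's fixed dyadic blocks $[2^j,2^{j+1})$ split at $m$ with $1-2^{-m}\asymp r$ (its estimate (3.1)), so no substantive difference.
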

\begin{proof}
 \ If $f\in \bv $, then for each $n\in \mathbb{N}$,
  \[ \begin{split}
  \fbv &\geq  \sup_{z=1-\frac{1}{n}}\nu(|z|)|f'(z)|\\
 & \geq \nu(1-\frac{1}{n})\left|\sum_{k=1}^{\infty}ka_{k}(1-\frac{1}{n})^{k-1}\right|\\
  &\gtrsim \nu(1-\frac{1}{n})\sum_{k=1}^{n}ka_{k},
    \end{split} \]
   and hence $ S(f)\lesssim \fbv$.  Since $a_{0}=|f(0)|\leq \fbv$, we may obtain $$S(f)+a_{0}\lesssim \fbv.$$

    On the other hand, if   $ S(f)<\infty$, then
    $$\nu(1-2^{-j})\sum_{k=2^{j}}^{2^{j+1}-1}ka_{k}\lesssim S(f), \ \ j\in \mathbb{N}.$$
    For every  $z\in \dd$ and $\frac{1}{2}\leq|z| <1$, we have
      \[ \begin{split}
      |f'(z)|&=\left|\sum_{j=0}^{\infty}\sum_{k=2^{j}}^{2^{j+1}-1}ka_{k}z^{k-1}\right|
      \leq \sum_{j=0}^{\infty}\left(\sum_{k=2^{j}}^{2^{j+1}-1}ka_{k}|z|^{k-1}\right)\\
      &\lesssim  S(f)\sum_{j=0}^{\infty} \frac{|z|^{2^{j}}}{\nu(1-2^{-j})}.
        \end{split} \]
        Next, we show that
        $$ \sum_{j=0}^{\infty} \frac{|z|^{2^{j}}}{\nu(1-2^{-j})}\lesssim \frac{1}{\nu(|z|)}\ \ \mbox{for all}\ \frac{1}{2}\leq |z|<1.\eqno{(3.1)}$$
    For each $\frac{1}{2}\leq |z|=r<1$,
  by choosing  $m\geq 2$ such that $r_{m-1}\leq r \leq r_{m}$, where $r_{m}=1-2^{-m}$. Then
$$
\sum_{j=0}^{\infty}\nu^{-1}(1-2^{-j})r^{2^{j}}\leq \sum_{j=0}^{m}\nu^{-1}(1-2^{-j})+\sum_{j=m+1}^{\infty}\nu^{-1}(1-2^{-j})r^{2^{j}}=S_{1}+S_{2}.
$$
Using Lemma 2.3 we have
 \[ \begin{split}
 S_{1}&\lesssim \nu^{-1}(1-2^{-m})\sum_{j=0}^{m}\left((\frac{1}{2})^{(m-j)a}+(\frac{1}{\lambda})^{(m-j)b}\right)\\
 &\lesssim  \nu^{-1}(1-2^{-m}).
 \end{split} \]
On the other hand,
 \[ \begin{split}
 S_{2}&=\sum_{j=m+1}^{\infty}\nu^{-1}(1-2^{-j})r^{2^{j}} \leq \sum_{j=m+1}^{\infty}\nu^{-1}(1-2^{-j})r_{m}^{2^{m}\cdot2^{j-m}}\\
 &\leq \sum_{j=m+1}^{\infty}\nu^{-1}(1-2^{-j})e^{-2^{(j-m)}}=\sum_{l=1}^{\infty}\nu^{-1}(1-2^{-(l+m)})e^{-2^{l}}\\
 &\lesssim \nu^{-1}(1-2^{-m})\sum_{l=1}^{\infty}e^{-2^{ l}}2^{lb} \lesssim \nu^{-1}(1-2^{-m}).
 \end{split} \]
Note that $\nu^{-1}(1-2^{-m})\asymp \nu^{-1}(r)$,
 we see that (3.1) is valid for all $z\in \dd$.

        Therefore,
         $$|f(0)|+\sup_{z\in \dd}\nu(|z|)|f'(z)|\lesssim a_{0}+S(f).$$
         This proof is complete.
         \end{proof}
\begin{cor}
Let  $\gamma>0$ and $f\in \hd$, $f(z)=\sn a_{n}z^{n}$, $a_{n}\geq 0 $ for all $n\geq 0$. Then $f\in \mathcal {B}^{\gamma}$ if and only if
$$\sup_{n\geq 1}n^{-\gamma}\sum_{k=1}^{n}ka_{k}<\infty.$$
\end{cor}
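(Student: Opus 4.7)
The plan is to derive Corollary 3.2 as a direct specialization of Theorem 3.1 to the weight $\nu(s)=(1-s^{2})^{\gamma}$. First I would verify that this $\nu$ indeed belongs to the class $\mathcal{N}$ of normal functions with $s_{0}=0$: choosing $a=b=\gamma$ in the definition makes $\nu(s)/(1-s^{2})^{a}\equiv 1$ both (almost) increasing and (almost) decreasing on $[0,1)$, so $\nu\in\mathcal{N}$, and Theorem 3.1 applies.

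Next I would compute the weight at the test points $1-1/n$. Since
\[
\nu\!\left(1-\tfrac{1}{n}\right)=\left(1-\left(1-\tfrac{1}{n}\right)^{2}\right)^{\gamma}=\left(\tfrac{2}{n}-\tfrac{1}{n^{2}}\right)^{\gamma}=n^{-\gamma}\left(2-\tfrac{1}{n}\right)^{\gamma},
\]
for every $n\geq 1$ we have $\nu(1-1/n)\asymp n^{-\gamma}$, with constants depending only on $\gamma$.

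Substituting this equivalence into the quantity $S(f)=\sup_{n\geq 1}\nu(1-\tfrac{1}{n})\sum_{k=1}^{n}ka_{k}$ from Theorem 3.1 gives
\[
S(f)\asymp \sup_{n\geq 1}n^{-\gamma}\sum_{k=1}^{n}ka_{k}.
\]
Theorem 3.1 then yields that $f\in\mathcal{B}^{\gamma}=\mathcal{B}_{\nu}$ if and only if this supremum is finite (the $a_{0}$ term is automatically finite since $a_{0}$ is a fixed nonnegative number and is in any case dominated by the norm via $a_{0}=|f(0)|\leq \|f\|_{\mathcal{B}^{\gamma}}$).

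Since everything reduces to an elementary asymptotic evaluation of $(1-(1-1/n)^{2})^{\gamma}$ followed by invocation of the previous theorem, there is no real obstacle; the only point that needs a brief sentence is the verification that the power weight $(1-s^{2})^{\gamma}$ satisfies the normality axioms so that Theorem 3.1 is legitimately applicable.
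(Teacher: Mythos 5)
Your proposal is correct and is exactly the intended derivation: the paper obtains Corollary 3.2 as an immediate specialization of Theorem 3.1 to the normal weight $\nu(s)=(1-s^{2})^{\gamma}$, using $\nu(1-\tfrac{1}{n})\asymp n^{-\gamma}$. Nothing further is needed.
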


If $f\in \bv$ has  nonnegative and non-increasing coefficients, then the result of Theorem 3.1 can be simpler.

\begin{thm}
Let   $f(z)=\sn a_{n}z^{n}\in \hd$ with $a_{n}$ nonnegative and non-increasing. Then $f\in \bv$ if and only if
$$ \sup_{n\geq 1}n^{2}\nu(1-\frac{1}{n})a_{n}<\infty.$$
Moreover,
$$\fbv\asymp  a_{0}+\sup_{n\geq 1}n^{2}\nu(1-\frac{1}{n})a_{n}.$$
\end{thm}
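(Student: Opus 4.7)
The plan is to deduce Theorem 3.3 directly from Theorem 3.1, which gives
\[
\|f\|_{\mathcal{B}_{\nu}} \asymp a_{0} + S(f), \qquad S(f):=\sup_{n\ge 1}\nu(1-\tfrac{1}{n})\sum_{k=1}^{n}k a_{k}.
\]
Under the additional hypothesis that $\{a_{n}\}$ is nonnegative and non-increasing, it therefore suffices to establish the two-sided estimate
\[
S(f) \asymp \sup_{n\ge 1} n^{2}\,\nu(1-\tfrac{1}{n})\, a_{n}.
\]

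The lower bound $S(f)\gtrsim \sup_{n}n^{2}\nu(1-\tfrac{1}{n}) a_{n}$ is the easy direction and uses only monotonicity: since $a_{k}\ge a_{n}$ for every $1\le k\le n$, we have
\[
\sum_{k=1}^{n}k a_{k}\ \ge\ a_{n}\sum_{k=1}^{n}k\ \gtrsim\ n^{2} a_{n},
\]
and multiplying by $\nu(1-\tfrac{1}{n})$ and taking the supremum finishes this half.

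For the reverse estimate, set $M:=\sup_{n\ge 1} n^{2}\nu(1-\tfrac{1}{n}) a_{n}$, so that $a_{k}\le M/(k^{2}\nu(1-\tfrac{1}{k}))$ for all $k\ge 1$, giving
\[
\sum_{k=1}^{n}k a_{k}\ \le\ M\sum_{k=1}^{n}\frac{1}{k\,\nu(1-\tfrac{1}{k})}.
\]
The whole problem reduces to showing $\nu(1-\tfrac{1}{n})\sum_{k=1}^{n}\frac{1}{k\,\nu(1-\tfrac{1}{k})}\lesssim 1$, uniformly in $n$. Here I would invoke Lemma 2.3 with $|z|=1-\tfrac{1}{n}$ and $|w|=1-\tfrac{1}{k}$, $k\le n$; since $(1-|z|^{2})/(1-|w|^{2})\asymp k/n\le 1$, setting $c:=\min(a,b)>0$ yields
\[
\frac{\nu(1-\tfrac{1}{n})}{\nu(1-\tfrac{1}{k})}\ \lesssim\ \Big(\tfrac{k}{n}\Big)^{c}.
\]
Substituting into the sum gives $n^{-c}\sum_{k=1}^{n}k^{c-1}\asymp 1$ because $\sum_{k=1}^{n}k^{c-1}\asymp n^{c}$ for any $c>0$, and the proof is done.

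The main obstacle is exactly this converse step: a pointwise bound on the coefficients is not by itself enough to control the partial sum $\sum_{k\le n} k a_{k}$, and one has to aggregate the reciprocal weights $1/\nu(1-\tfrac{1}{k})$ against $1/k$. The geometric-type decay $(k/n)^{c}$ furnished by the normality of $\nu$ through Lemma 2.3 is precisely what compresses this harmonic-looking sum down to a bounded quantity; without the exponents $a,b>0$ this estimate would fail.
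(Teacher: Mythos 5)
Your proof is correct, and the sufficiency half takes a genuinely different route from the paper. The paper proves the bound $\fbv\lesssim a_{0}+\sup_{n}n^{2}\nu(1-\frac1n)a_{n}$ directly: it feeds the pointwise coefficient bound $a_{n}\lesssim M/(n^{2}\nu(1-\frac1n))$ into $|f'(z)|\leq\sum na_{n}|z|^{n-1}$, compares the resulting series $\sum_{n}\frac{|z|^{n}}{n\nu(1-\frac1n)}$ with a Laplace-type integral (using monotonicity of $h_{x}(t)=x^{t}/(t\nu(1-\frac1t))$ for large $t$), and then invokes Lemma 2.4 together with $\nu(1-\log\frac1{|z|})\asymp\nu(|z|)$; in particular it bypasses the sufficiency half of Theorem 3.1. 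You instead stay entirely discrete: you reduce the theorem to the equivalence $S(f)\asymp\sup_{n}n^{2}\nu(1-\frac1n)a_{n}$ and prove the nontrivial half by the estimate $\nu(1-\frac1n)\sum_{k=1}^{n}\frac{1}{k\nu(1-\frac1k)}\lesssim n^{-c}\sum_{k=1}^{n}k^{c-1}\lesssim 1$ via Lemma 2.3 (in fact only the almost-decreasing exponent $a$ is used, since $1-\frac1n\geq 1-\frac1k$; the implicit constant in $\frac{1-|z|^{2}}{1-|w|^{2}}\asymp\frac kn$ is harmless). This is exactly the summation estimate the paper uses later in the proof of Theorem 4.2, $(c)\Rightarrow(b)$, so your argument is consistent with the paper's toolkit, avoids Lemma 2.4 and the sum-to-integral comparison altogether, and makes transparent that monotonicity of $\{a_{n}\}$ is needed only for the necessity direction; the paper's route, in exchange, yields a self-contained pointwise bound on $f'$ that does not lean on the dyadic-block machinery behind Theorem 3.1.
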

\begin{proof}
If $a_{n}$ nonnegative and non-increasing, then  $\sum_{k=1}^{n}ka_{k} \gtrsim n^{2}a_{n}$. The proof of the necessity follows from Theorem 3.1 immediately.

On the other hand, if $ M:=\sup_{n\geq 1}n^{2}\nu(1-\frac{1}{n})a_{n}<\infty$, then
$$a_{n}\lesssim \frac{M}{n^{2}\nu(1-\frac{1}{n})}  \ \  \mbox{for all}\  n \geq 1. $$
For every  $z\in \dd$ and  $\frac{1}{2}<|z|<1$,
$$|f'(z)|\leq \sum_{n=1}^{\infty}na_{n}|z|^{n-1}\lesssim  M\sum_{n=1}^{\infty} \frac{|z|^{n}}{n\nu(1-\frac{1}{n})}. $$
Let $$h_{x}(t)=\frac{x^{t}}{t\nu(1-\frac{1}{t})}\ \ \ x\in(0,1),$$
then $h_{x}$ is  decreasing in $t$, for sufficiently large $ t$ and each $x\in(0,1)$. So, by Lemma 2.4 we have
$$\sum_{n=1}^{\infty}\frac{|z|^{n}}{n\nu(1-\frac{1}{n})}\asymp \int_{e}^{\infty}\frac{e^{-t\log\frac{1}{|z|}}}{t\nu(1-\frac{1}{t})}dt\lesssim \frac{1}{\nu(1-\log\frac{1}{|z|})}\asymp \frac{1}{\nu(|z|)}.$$
This means that
$$\fbv \lesssim a_{0}+\sup_{n\geq 1}n^{2}\nu(1-\frac{1}{n})a_{n}.$$
The proof is complete.
\end{proof}
\begin{cor}
Let $\gamma>0$ and   $f(z)=\sn a_{n}z^{n}\in \hd$ with $a_{n}$ nonnegative and non-increasing. Then $f\in \mathcal {B}^{\gamma}$ if and only if
$$\sup_{n\geq 1}n^{2-\gamma}a_{n}<\infty.$$
\end{cor}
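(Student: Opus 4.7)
The plan is to deduce this corollary directly from Theorem 3.3 by specializing the weight $\nu$. Take $\nu(s) = (1-s^2)^{\gamma}$ with $\gamma > 0$; this function is normal (it satisfies the definition with $a=b=\gamma$ and $s_0 = 0$), and by construction $\mathcal{B}_{\nu} = \mathcal{B}^{\gamma}$. So to apply Theorem 3.3, I just need to translate the quantity $n^{2}\nu(1-1/n)a_n$ into the claimed $n^{2-\gamma}a_n$ up to a constant that is uniform in $n$.

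For this, I would compute
\[
\nu\!\left(1-\tfrac{1}{n}\right) = \left(1 - \left(1-\tfrac{1}{n}\right)^{2}\right)^{\gamma} = \left(\tfrac{2}{n} - \tfrac{1}{n^{2}}\right)^{\gamma},
\]
and note that $\tfrac{2}{n} - \tfrac{1}{n^{2}} \asymp \tfrac{1}{n}$ for all $n \geq 1$, with comparison constants depending only on $\gamma$. Hence $\nu(1-\tfrac{1}{n}) \asymp n^{-\gamma}$, which gives $n^{2}\nu(1-\tfrac{1}{n})a_n \asymp n^{2-\gamma}a_n$ uniformly in $n$.

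Combining this with Theorem 3.3 yields the equivalence $f \in \mathcal{B}^{\gamma} \iff \sup_{n\geq 1} n^{2-\gamma}a_n < \infty$, along with the norm equivalence $\|f\|_{\mathcal{B}^{\gamma}} \asymp a_0 + \sup_{n\geq 1} n^{2-\gamma}a_n$. No obstacle arises here: since both the hypothesis (nonnegative, nonincreasing coefficients) and the asymptotic reduction $\nu(1-1/n) \asymp n^{-\gamma}$ are straightforward, the corollary is a genuine one-line consequence of the theorem, and the only thing to verify carefully is that the chosen $\nu$ indeed lies in $\mathcal{N}$ and reproduces the Bloch type space $\mathcal{B}^{\gamma}$.
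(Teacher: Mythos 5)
Your proposal is correct and follows exactly the intended route: the paper states this corollary as an immediate specialization of Theorem 3.3 with $\nu(s)=(1-s^{2})^{\gamma}$, and your verification that $\nu\in\mathcal{N}$ and $\nu(1-\tfrac{1}{n})\asymp n^{-\gamma}$ is precisely what makes that specialization work.
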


\section{Generalized integral type Hilbert operator acting on weighted Bloch space}

Let  $\omega \in  \n$, we write  $\widetilde{\omega}(t)=\int_{0}^{t}\frac{1}{\omega(s)}ds$.  We begin with  characterizing those  measure $\mu$ for which the operator $\ii_{\mu_{\alpha+1}}$ is well defined on $\bw$.
 \begin{pro}
 Let $\mu$ be a positive Borel measure on $[0, 1)$ and $\alpha>-1$. For any given $f\in \bw$, $\ii_{\mu_{\alpha+1}}(f)$ uniformly converges on any compact subset of $\dd$ if
and only if
$$\I (\widetilde{\omega}(t)+1 )d\mu(t)<\infty. \eqno{(4.1)}$$
 \end{pro}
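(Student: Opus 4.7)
The plan is a standard test-function argument. The key universal estimate is that for any $f\in\bw$ and $t\in[0,1)$, integrating $f'$ along the segment from $0$ to $t$ gives
\[
|f(t)| \le |f(0)| + \|f\|_{\bw}\int_0^t \frac{ds}{\omega(s)} \le \|f\|_{\bw}\bigl(1+\widetilde{\omega}(t)\bigr).
\]
So the radial growth of any $\bw$ function is dominated by $1+\widetilde{\omega}(t)$, and the strategy is to exhibit one test function in $\bw$ that realises this growth rate from below.

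For the sufficiency direction, fix a compact $K\subset\mathbb{D}$ and $r_0\in(0,1)$ with $K\subset\{|z|\le r_0\}$; then $|1-tz|^{\alpha+1}\ge(1-r_0)^{\alpha+1}$ for $t\in[0,1)$ and $z\in K$. Combined with the pointwise bound above,
\[
\sup_{z\in K}\int_r^1 \frac{|f(t)|}{|1-tz|^{\alpha+1}}\, d\mu(t) \lesssim \frac{\|f\|_{\bw}}{(1-r_0)^{\alpha+1}}\int_r^1 \bigl(1+\widetilde{\omega}(t)\bigr)\, d\mu(t),
\]
and the right-hand side tends to $0$ as $r\to 1^{-}$ by (4.1). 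This gives absolute convergence at every point of $\mathbb{D}$ together with uniform convergence on $K$, so $\ii_{\mu_{\alpha+1}}(f)$ is well defined and holomorphic in $\mathbb{D}$.

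For necessity I construct a single test function. Starting from the lacunary series $g$ produced by Lemma \ref{lem2.2} (which has nonnegative Taylor coefficients and satisfies $\omega(r)g(r)\ge N_1$ on $[0,1)$ and $\omega(|\zeta|)|g(\zeta)|\le N_2$ on $\mathbb{D}$), set $h(z)=\int_0^{z}g(\zeta)\,d\zeta$. Then $h(0)=0$ and $h'=g$, so $\|h\|_{\bw}=\sup_{z\in\mathbb{D}}\omega(|z|)|g(z)|\le N_2$ and $h\in\bw$; meanwhile, for real $t\in[0,1)$,
\[
h(t)=\int_0^t g(s)\, ds \ge N_1\int_0^t \frac{ds}{\omega(s)}=N_1\,\widetilde{\omega}(t).
\]
Since $h$ has nonnegative coefficients, the assumed uniform convergence of $\ii_{\mu_{\alpha+1}}(h)$ on compact subsets of $\mathbb{D}$, specialised at $z=0$, gives $\int_0^1 h(t)\,d\mu(t)=\ii_{\mu_{\alpha+1}}(h)(0)<\infty$, hence $\int_0^1 \widetilde{\omega}(t)\,d\mu(t)<\infty$; together with $\mu([0,1))<\infty$ this is (4.1). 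The only non-routine step is producing this test function: all the work is absorbed into Lemma \ref{lem2.2}, which hands us a series matching $1/\omega$ pointwise, and integration along a radius then supplies the $\widetilde{\omega}$ growth needed to saturate the universal upper bound from the first paragraph.
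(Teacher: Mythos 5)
Your argument follows essentially the same route as the paper: the same universal growth estimate $|f(t)|\lesssim(\widetilde{\omega}(t)+1)\|f\|_{\bw}$ for the sufficiency direction, and the same Lemma \ref{lem2.2} test function evaluated at $z=0$ for the necessity direction. However, there is one small but genuine gap in the necessity step: you conclude $\I\widetilde{\omega}(t)\,d\mu(t)<\infty$ from your test function $h(z)=\int_0^z g(\zeta)\,d\zeta$ and then say ``together with $\mu([0,1))<\infty$ this is (4.1)'', but $\mu([0,1))<\infty$ is nowhere justified. The Proposition assumes only that $\mu$ is a \emph{positive} Borel measure (the paper writes ``finite positive'' elsewhere, e.g.\ in Theorem 4.3, precisely when finiteness is meant), and finiteness of $\I\widetilde{\omega}(t)\,d\mu(t)$ does not control the mass of $\mu$ near $t=0$, since $\widetilde{\omega}(t)\to0$ as $t\to0^{+}$; indeed the ``$+1$'' in condition (4.1) is there exactly to encode the finiteness of $\mu$.

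The repair is one line and matches what the paper does: take the test function $f(z)=\int_0^z g(s)\,ds+1$ instead of $h$, so that $f(t)\asymp\widetilde{\omega}(t)+1\gtrsim 1$ and the single evaluation $\ii_{\mu_{\alpha+1}}(f)(0)=\I f(t)\,d\mu(t)<\infty$ yields all of (4.1), including $\mu([0,1))<\infty$. Equivalently, you may apply the hypothesis to the constant function $1\in\bw$ to get $\mu([0,1))=\ii_{\mu_{\alpha+1}}(1)(0)<\infty$ and then combine with your estimate for $h$. With that addition your proof is complete and coincides with the paper's.
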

  \begin{proof}
 Let $f\in \bw$, it is easy to verify that
  $$|f(z)|\lesssim (\widetilde{\omega}(|z|)+1)\fbw \ \ \mbox{for all}\ \  z\in \dd . \eqno{(4.2)}$$
  If (4.1) holds, then for each $0 < r < 1 $ and $z\in\dd$  with $|z| \leq r$, we have
 \[ \begin{split}
 |\ii_{\mu_{\alpha+1}}(f)(z)|& \leq \I \frac{|f(t)|}{|1-tz|^{\alpha+1}}d\mu(t)\\
 &\lesssim \frac{\fbw}{(1-r)^{\alpha+1}}\I (\widetilde{\omega}(t)+1 )d\mu(t)\\
 &\lesssim \frac{\fbw}{(1-r)^{\alpha+1}}.
  \end{split} \]
  This implies that   $\ii_{\mu_{\alpha+1}}(f)$ uniformly converges on any compact subset of $\dd$ and  analytic in $\dd$.

Suppose that the operator $\ii_{\mu_{\alpha+1}}$ is well defined in $\bw$. Considering the function
 $$f(z)=\int_{0}^{z}g(s)ds+1$$where $g$ is the function in Lemma 2.2 with  respect to $\omega$. Then Lemma 2.2 implies that  $f\in \bw$.
Since $\ii_{\mu_{\alpha+1}}(f)(z)$ is well
defined for every $z\in \dd$,  we have
$$|I_{\mu_{\alpha+1}}(f)(0)|=\left|\I f(t)d\mu(t)\right|<\infty.$$
Note that  $\mu $ is a  positive  measure and  $g(s)>0$ for all $s\in [0,1)$. By Lemma 2.2, we  obtain that
$$f(t)=\int_{0}^{t}g(s)ds+1\asymp \widetilde{\omega}(t)+1 .
\eqno{(4.3)}$$
Therefore,
$$\I (\widetilde{\omega}(t)+1 )d\mu(t)<\infty.  $$
This proof is complete.
 \end{proof}

The sublinear  generalized integral type Hilbert operator $\widetilde{\ii}_{\mu_{\alpha+1}}$ defined by
$$\widetilde{\ii}_{\mu_{\alpha+1}}(f)(z)=\I \frac{|f(t)|}{(1-tz)^{\alpha+1}}d\mu(t), \ \ \ (\alpha>-1).$$
Obviously, Proposition 3.3 is also valid if   $\ii_{\mu_{\alpha+1}}$  is replaced by  $\widetilde{\ii}_{\mu_{\alpha+1}}$.
By mean of  Lemma 2.1, Theorem 3.1 and  the sublinear integral type Hilbert operator $\widetilde{\ii}_{\mu_{\alpha+1}}$, we have the following results.

 \begin{thm}
 Let $\omega,\nu \in \mathcal {N}$ and $\alpha>-1$. Suppose that $\mu$ is a positive Borel measure on $[0, 1)$  and satisfies  (4.1). Then the following statements are equivalent.
 \\ (a) $\ii_{\mu_{\alpha+1}}: \bw\rightarrow \bv$ is bounded;
 \\ (b) $\widetilde{\ii}_{\mu_{\alpha+1}}: \bw\rightarrow \bv$ is bounded;
 \\ (c) $\displaystyle{\sup_{n\geq 1}n^{\alpha+2}\nu(1-\frac{1}{n})\I t^{n}(\widetilde{\omega}(t)+1)d\mu(t)<\infty.}$
 \end{thm}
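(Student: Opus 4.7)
The plan is to prove the cyclic implications $(b)\Rightarrow(a)\Rightarrow(c)\Rightarrow(b)$. The implication $(b)\Rightarrow(a)$ is formal: Taylor expanding the kernel of $\ii_{\mu_{\alpha+1}}$ gives
$$\ii_{\mu_{\alpha+1}}(f)(z)=\sum_{n=0}^{\infty}\binom{n+\alpha}{n}\left(\I f(t)t^{n}d\mu(t)\right)z^{n},$$
and the analogous identity for $\widetilde{\ii}_{\mu_{\alpha+1}}(f)$ with $|f(t)|$ in place of $f(t)$. The Taylor coefficients of $\widetilde{\ii}_{\mu_{\alpha+1}}(f)$ are non-negative and dominate the moduli of those of $\ii_{\mu_{\alpha+1}}(f)$, so $|(\ii_{\mu_{\alpha+1}}(f))'(z)|\leq(\widetilde{\ii}_{\mu_{\alpha+1}}(f))'(|z|)$ and $|\ii_{\mu_{\alpha+1}}(f)(0)|\leq\widetilde{\ii}_{\mu_{\alpha+1}}(f)(0)$. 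Because the $\bv$-norm of a power series with non-negative coefficients is attained along $[0,1)$, this yields $||\ii_{\mu_{\alpha+1}}(f)||_{\bv}\lesssim||\widetilde{\ii}_{\mu_{\alpha+1}}(f)||_{\bv}$.

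For $(a)\Rightarrow(c)$ I would test on $f_{0}(z):=1+\int_{0}^{z}g(s)ds$, where $g$ is the function associated with $\omega$ by Lemma 2.2. Lemma 2.2 ensures $f_{0}\in\bw$ with $||f_{0}||_{\bw}\lesssim 1$, $f_{0}$ has non-negative Taylor coefficients, and identity (4.3) gives $f_{0}(t)\asymp\widetilde{\omega}(t)+1$ on $[0,1)$. Consequently $\ii_{\mu_{\alpha+1}}(f_{0})$ has non-negative Taylor coefficients, and Theorem 3.1 converts the assumption $||\ii_{\mu_{\alpha+1}}(f_{0})||_{\bv}\lesssim 1$ into
$$\sup_{n\geq 1}\nu(1-1/n)\I f_{0}(t)P_{n}(t)d\mu(t)\lesssim 1,\qquad P_{n}(t):=\sum_{k=1}^{n}k\binom{k+\alpha}{k}t^{k}.$$
The elementary lower bound $P_{n}(t)\geq t^{n}\sum_{k=1}^{n}k\binom{k+\alpha}{k}\gtrsim n^{\alpha+2}t^{n}$ then produces exactly the power $n^{\alpha+2}$ demanded by (c).

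For $(c)\Rightarrow(b)$, apply Theorem 3.1 to $\widetilde{\ii}_{\mu_{\alpha+1}}(f)$, whose coefficients $\widetilde{c}_{k}=\binom{k+\alpha}{k}\I |f(t)|t^{k}d\mu(t)$ are non-negative. The pointwise estimate (4.2) replaces $|f(t)|$ by $(\widetilde{\omega}(t)+1)||f||_{\bw}$, and (c) then supplies $\I t^{k}(\widetilde{\omega}(t)+1)d\mu(t)\lesssim 1/(k^{\alpha+2}\nu(1-1/k))$. Combined with $k\binom{k+\alpha}{k}\asymp k^{\alpha+1}$ this reduces the partial sum to
$$\sum_{k=1}^{n}k\widetilde{c}_{k}\lesssim||f||_{\bw}\sum_{k=1}^{n}\frac{1}{k\,\nu(1-1/k)}.$$
Lemma 2.3 yields $\nu(1-1/n)/\nu(1-1/k)\lesssim(k/n)^{a}$ for $k\leq n$, whence $\nu(1-1/n)\sum_{k=1}^{n}(k\nu(1-1/k))^{-1}\lesssim n^{-a}\sum_{k=1}^{n}k^{a-1}\lesssim 1$. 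The constant term $|\widetilde{\ii}_{\mu_{\alpha+1}}(f)(0)|\leq||f||_{\bw}\I(\widetilde{\omega}(t)+1)d\mu(t)$ is finite by hypothesis (4.1).

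The main obstacle is the sharp extraction of the power $n^{\alpha+2}$ in the second step: keeping only the final term $nc_{n}$ of $\sum_{k=1}^{n}kc_{k}$ produces only $n^{\alpha+1}$, which is too weak, so the argument must exploit the whole partial sum together with the uniform lower bound $P_{n}(t)\gtrsim n^{\alpha+2}t^{n}$. The summation estimate in the third step is routine once Lemma 2.3 and the positivity of the exponent $a$ from the normal-function definition are in hand.
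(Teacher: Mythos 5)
Your proposal is correct, and its skeleton coincides with the paper's: the same cyclic scheme, the same test function $f_{0}(z)=1+\int_{0}^{z}g(s)\,ds$ built from Lemma 2.2, Theorem 3.1 as the coefficient criterion in both the necessity and sufficiency steps, the same extraction of $n^{\alpha+2}$ from the whole partial sum via $t^{k}\geq t^{n}$ for $k\leq n$ together with $\sum_{k=1}^{n}k^{\alpha+1}\asymp n^{\alpha+2}$, and the same normal-weight summation $\nu(1-\frac1n)\sum_{k=1}^{n}\frac{1}{k\nu(1-\frac1k)}\lesssim n^{-a}\sum_{k=1}^{n}k^{a-1}\lesssim 1$ (the paper invokes the almost-monotonicity in the definition of a normal function rather than Lemma 2.3, but this is the same estimate). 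The only genuine deviation is in $(b)\Rightarrow(a)$: the paper passes through Lemma 2.1, comparing $\|V_{n}\ast \ii_{\mu_{\alpha+1}}(f)\|_{\infty}$ with $\|V_{n}\ast \widetilde{\ii}_{\mu_{\alpha+1}}(f)\|_{\infty}$ via the smooth-polynomial characterization of $\bv$, whereas you argue directly from coefficient domination $|b_{n}|\leq c_{n}$ that $|(\ii_{\mu_{\alpha+1}}(f))'(z)|\leq(\widetilde{\ii}_{\mu_{\alpha+1}}(f))'(|z|)$, so the weighted sup for $\ii_{\mu_{\alpha+1}}(f)$ is dominated by that of $\widetilde{\ii}_{\mu_{\alpha+1}}(f)$ evaluated at radial points; this is more elementary and dispenses with Lemma 2.1 for this step (indeed you do not even need the sup to be "attained" on $[0,1)$, only that radial points belong to $\dd$). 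Do make explicit, as the paper does via Proposition 4.1 and (4.2), that hypothesis (4.1) justifies the termwise integration giving the coefficient formulas for $\ii_{\mu_{\alpha+1}}(f)$ and $\widetilde{\ii}_{\mu_{\alpha+1}}(f)$ for general $f\in\bw$, which your "formal" Taylor expansion tacitly uses.
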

 \begin{proof}
 $(a)\Rightarrow(c):$ If $\ii_{\mu_{\alpha+1}}: \bw\rightarrow \bv$ is bounded.
 For  each $f\in\bw$,  Proposition 3.3 implies that $\ii_{\mu_{\alpha+1}}(f)$ converges absolutely
 for every $z\in \dd$ and
 $$\ii_{\mu_{\alpha+1}}(f)(z)=\sum_{n=0}^{\infty}\left(\frac{\Gamma(n+1+\alpha)}{\Gamma(n+1)\Gamma(\alpha+1)}\I t^{n}f(t)d\mu(t)\right)z^{n},\ \ z\in\dd. $$
 Take
 $$ f(z)=\int_{0}^{z}g(s)ds+1,$$
 where $g$ is the function in Lemma 2.2 with  respect to $\omega$.
Then $f\in \bw$ and
$$
\ii_{\mu_{\alpha+1}}(f)(z)=\I \frac{f(t)}{(1-tz)^{\alpha+1}}d\mu(t)=\sum_{n=0}^{\infty}b_{n}z^{n}
$$
where
  $$b_{n}=\frac{\Gamma(n+1+\alpha)}{\Gamma(n+1)\Gamma(\alpha+1)}\I t^{n}\left(\int_{0}^{t}g(s)ds+1\right)d\mu(t).$$
  It is clear that  $\{b_{n}\}_{n=1}^{\infty}$ is a  nonnegative sequence.
Using Theorem 3.1,  (4.3)   and Stirling's formula   we have
  \[ \begin{split}
  ||\ii_{\mu_{\alpha+1}}(f) ||_{\bv}&\gtrsim  \sup_{n\geq 1}\nu(1-\frac{1}{n})\sum_{k=1}^{n}kb_{k}\\
  &\gtrsim \sup_{n\geq 1}\nu(1-\frac{1}{n})\I t^{n}\left(\widetilde{\omega}(t)+1\right)d\mu(t)\sum_{k=1}^{n} k^{\alpha+1}\\
 & \asymp \sup_{n\geq 1}n^{\alpha+2}\nu(1-\frac{1}{n})\I t^{n}(\widetilde{\omega}(t)+1)d\mu(t).
   \end{split} \]
   Thus, $$\sup_{n\geq 1}n^{\alpha+2}\nu(1-\frac{1}{n})\I t^{n}(\widetilde{\omega}(t)+1)d\mu(t)<\infty.$$
   $(c)\Rightarrow(b):$ Assume (c). Then for each $n\in \mathbb{N}$, we have
    $$\I t^{n}(\widetilde{\omega}(t)+1)d\mu(t)\lesssim \frac{1}{n^{\alpha+2}\nu(1-\frac{1}{n})} . \eqno{(4.4)}$$
 For a given  $0 \not\equiv f \in \bw$,
 $$
\widetilde{\ii}_{\mu_{\alpha+1}}(f)(z)=\I \frac{|f(t)|}{(1-tz)^{\alpha+1}}d\mu(t)=\sum_{n=0}^{\infty}c_{n}z^{n},
 $$
 where  $$c_{n}=\frac{\Gamma(n+1+\alpha)}{\Gamma(n+1)\Gamma(\alpha+1)}\I t^{n}|f(t)|d\mu(t).$$
Obviously, $\{c_{n}\}_{n=1}^{\infty}$ is   a  nonnegative sequence. Using  (4.2),   (4.4), and the definition of normal weight,  we deduce that
    \[ \begin{split}
     & \ \ \ \ |c_{0}|+ \sup_{n\geq 1}\nu(1-\frac{1}{n})\sum_{k=1}^{n} kc_{k}\\
    &\lesssim  \fbw+\fbw \sup_{n\geq 1} \nu(1-\frac{1}{n}) \sum_{k=1}^{n}(k+1)^{\alpha+1}\I t^{k} (\widetilde{\omega}(t)+1)d\mu(t)\\
    &\lesssim  \fbw+\fbw\sup_{n\geq 1}\nu(1-\frac{1}{n})  \sum_{k=1}^{n} \frac{1}{k\nu(1-\frac{1}{k})}\\
     &\lesssim \fbw+\fbw
     \sup_{n\geq 1}\frac{1}{(n+1)^{a}} \sum_{k=1}^{n}(k+1)^{a-1} \\
   &  \lesssim \fbw.
     \end{split} \]
     Now, Theorem 3.1 shows that $\widetilde{\ii}_{\mu_{\alpha+1}}(f) \in \bv$ and $\widetilde{\ii}_{\mu_{\alpha+1}}: \bw\rightarrow \bv$ is bounded.

     $(b)\Rightarrow (a):$ If  $\widetilde{\ii}_{\mu_{\alpha+1}}: \bw\rightarrow \bv$ is bounded, for each $f\in \bw$,  by Lemma 2.1 we get
     $$\sup_{n\geq 1}\nu(1-2^{-n})2^{n}||V_{n}\ast \widetilde{\ii}_{\mu_{\alpha+1}}(f)||_{\infty}\asymp ||\widetilde{\ii}_{\mu_{\alpha+1}}(f)||_{\bv}\lesssim \fbw ||\widetilde{\ii}_{\mu_{\alpha+1}}||.$$
     Since the coefficients of $\widetilde{\ii}_{\mu_{\alpha+1}}(f)$ are non-negative, it is easy to check that
     $$M_{\infty}(r,V_{n}\ast \ii_{\mu_{\alpha+1}}(f) )\leq M_{\infty}(r,V_{n}\ast \widetilde{\ii}_{\mu_{\alpha+1}}(f) ) \ \
     \mbox{for all}\  0<r<1 .$$
     Therefore, $$||V_{n}\ast \ii_{\mu_{\alpha+1}}(f)||_{\infty}=\sup_{0<r<1}M_{\infty}(r,V_{n}\ast \ii_{\mu_{\alpha+1}}(f) )\leq  ||V_{n}\ast \widetilde{\ii}_{\mu_{\alpha+1}}(f)||_{\infty}.$$
Consequently,
$$||\ii_{\mu_{\alpha+1}}(f)||_{\bv}\asymp\sup_{n\geq 1}\nu(1-2^{-n})2^{n}||V_{n}\ast \ii_{\mu_{\alpha+1}}(f)||_{\infty} \lesssim \fbw.$$
This implies that
 $\ii_{\mu_{\alpha+1}}: \bw\rightarrow \bv$ is bounded.
 \end{proof}

\begin{thm}
 Let $\omega,\nu \in \mathcal {N}$ and $\alpha>-1$.  Suppose that $\mu$ is a finite positive Borel measure on $[0, 1)$  and $\widetilde{\omega}(1)<\infty$. Then the following statements are equivalent.
 \\ (a) \ $\ii_{\mu_{\alpha+1}}: \bw\rightarrow \bv$ is bounded;
\\ (b) \  $\widetilde{\ii}_{\mu_{\alpha+1}}: \bw\rightarrow \bv$ is bounded;
 \\ (c) \ $\ii_{\mu_{\alpha+1}}: \bw\rightarrow \bv$ is compact;
\\ (d) \  $\widetilde{\ii}_{\mu_{\alpha+1}}: \bw\rightarrow \bv$ is compact;
 \\ (e) \ $\displaystyle{\sup_{n\geq 1}n^{\alpha+2}\nu(1-\frac{1}{n})\mu_{n}<\infty.}$
\end{thm}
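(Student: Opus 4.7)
The plan is as follows. First, $(a)\Leftrightarrow(b)\Leftrightarrow(e)$ follow at once from Theorem~4.2: the hypothesis $\widetilde{\w}(1)<\infty$ forces $1\le \widetilde{\w}(t)+1\le \widetilde{\w}(1)+1$ on $[0,1)$, so $\I t^n(\widetilde{\w}(t)+1)\,d\mu(t)\asymp \mu_n$ and condition (c) of Theorem~4.2 reduces to condition (e) here. Since compact operators are bounded, $(c)\Rightarrow(a)$ and $(d)\Rightarrow(b)$ are trivial. Hence the whole theorem reduces to proving $(e)\Rightarrow(d)$, after which the coefficient-domination argument used in $(b)\Rightarrow(a)$ of Theorem~4.2 (together with Lemma~2.1) transfers compactness from $\widetilde{\ii}_{\mu_{\alpha+1}}$ to $\ii_{\mu_{\alpha+1}}$, yielding $(d)\Rightarrow(c)$.

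To prove $(e)\Rightarrow(d)$ I would apply Lemma~2.6 and take an arbitrary bounded sequence $\{f_k\}$ in $\bw$ with $f_k\to 0$ uniformly on compact subsets of $\dd$. The hypothesis $\widetilde{\w}(1)<\infty$ combined with the pointwise estimate $|f(z)|\lesssim(\widetilde{\w}(|z|)+1)\|f\|_{\bw}$ produces a uniform bound $|f_k(t)|\le M$ on $[0,1)$. Since $\mu$ is finite and $f_k\to 0$ pointwise, dominated convergence gives
\[
I_j^{(k)}:=\I t^j|f_k(t)|\,d\mu(t)\le \I |f_k(t)|\,d\mu(t)\longrightarrow 0
\]
uniformly in $j$ as $k\to\infty$. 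Because $\widetilde{\ii}_{\mu_{\alpha+1}}(f_k)$ has non-negative Taylor coefficients $c_j^{(k)}\asymp j^\alpha I_j^{(k)}$, Theorem~3.1 reduces the target estimate to showing
\[
c_0^{(k)}+\sup_{n\ge 1}\nu(1-\tfrac{1}{n})\sum_{j=1}^n j^{\alpha+1}I_j^{(k)}\longrightarrow 0 \qquad (k\to\infty).
\]

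The key step is to split each $I_j^{(k)}$ as $\int_0^r+\int_r^1$ for an auxiliary parameter $r\in(0,1)$. On $[0,r]$ one uses $|f_k(t)|\le \sup_{[0,r]}|f_k|\to 0$ together with the moment bound $\mu_j\lesssim 1/(j^{\alpha+2}\nu(1-1/j))$ from (e) and the summation inequality $\nu(1-1/n)\sum_{j=1}^n 1/(j\nu(1-1/j))\lesssim 1$ already established in the proof of $(c)\Rightarrow(b)$ of Theorem~4.2; the resulting contribution is $\lesssim \sup_{[0,r]}|f_k|$, which vanishes as $k\to\infty$ for any fixed $r$. On $[r,1)$ one uses only $|f_k|\le M$ and controls $M\nu(1-1/n)\sum_{j=1}^n j^{\alpha+1}\int_r^1 t^j\,d\mu(t)$ by applying condition (e) to the restricted measure $\mu|_{[r,1)}$, together with Lemma~2.3 to govern the ratios $\nu(1-1/n)/\nu(1-1/j)$; since $\mu$ is finite, the tail mass $\mu([r,1))\to 0$ as $r\to 1$, making this contribution uniformly small in $n$. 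Given $\varepsilon>0$, one first picks $r$ close enough to $1$ that the tail is $<\varepsilon/2$, and then takes $k$ so large that the $[0,r]$-contribution is $<\varepsilon/2$.

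The principal obstacle is making the $[r,1)$-tail estimate uniform in $n$ while still vanishing as $r\to 1$. Because $\widetilde{\w}(1)<\infty$ only supplies the crude uniform bound $|f_k|\le M$ near the boundary, the smallness must come entirely from the measure side. This requires a delicate splitting of the inner sum around the transition index $j\sim 1/(1-r)$, balancing the polynomial factor $j^{\alpha+1}$ against the decay of the truncated moments $\int_r^1 t^j\,d\mu$ and invoking both exponents $a,b$ in the two-sided normal-weight estimate of Lemma~2.3 to close the uniform bound.
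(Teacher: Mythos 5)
Your reductions $(a)\Leftrightarrow(b)\Leftrightarrow(e)$, $(c)\Rightarrow(a)$, $(d)\Rightarrow(b)$, and the transfer $(d)\Rightarrow(c)$ by coefficient domination all match the paper, but your proof of the core implication $(e)\Rightarrow(d)$ has a genuine gap, and it sits exactly at the point you flag as ``the principal obstacle.'' From $\widetilde{\omega}(1)<\infty$ you extract only the uniform bound $|f_{k}(t)|\leq M$, whereas the hypothesis gives much more: since $|f_{k}(z)-f_{k}(\rho z)|\leq \Vert f_{k}\Vert_{\mathcal {B}_{\omega}}\bigl(\widetilde{\omega}(1)-\widetilde{\omega}(\rho)\bigr)$ uniformly in $k$, a bounded sequence in $\mathcal {B}_{\omega}$ converging to $0$ on compacta satisfies $\sup_{z\in \mathbb{D}}|f_{k}(z)|\rightarrow 0$ (this is the paper's appeal to Lemma 2.5 of \cite{Y1}). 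With that, no splitting in $r$ is needed at all: the whole quantity is bounded by $\sup_{[0,1)}|f_{k}|$ times $1+\sup_{n}\nu(1-\frac1n)\sum_{j\leq n}\frac{1}{j\nu(1-\frac1j)}\lesssim 1$, exactly as in the proof of $(c)\Rightarrow(b)$ of Theorem 4.2.

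Your alternative route cannot be completed, because the tail claim you need --- that (e) together with $\mu([r,1))\rightarrow 0$ forces $\sup_{n}\nu(1-\frac1n)\sum_{j=1}^{n}j^{\alpha+1}\int_{r}^{1}t^{j}d\mu(t)\rightarrow 0$ as $r\rightarrow 1$ --- is false in general. Take $\nu(s)=(1-s^{2})^{c}$ with $0<c<\alpha+2$ and $d\mu(t)=(1-t)^{\alpha+1-c}dt$, a finite measure with $\mu_{n}\asymp n^{-(\alpha+2-c)}$, so (e) holds. For $r$ close to $1$ put $m=\lceil (1-r)^{-1}\rceil$ and $n=2m$; for $m\leq j\leq 2m$ one has $1-\frac{1}{2j}\geq r$, hence
$$\int_{r}^{1}t^{j}(1-t)^{\alpha+1-c}dt\;\geq\;\Bigl(1-\tfrac{1}{2j}\Bigr)^{j}\int_{1-\frac{1}{2j}}^{1}(1-t)^{\alpha+1-c}dt\;\gtrsim\; j^{-(\alpha+2-c)},$$
and therefore
$$\nu\Bigl(1-\tfrac{1}{2m}\Bigr)\sum_{j=m}^{2m}j^{\alpha+1}\int_{r}^{1}t^{j}d\mu(t)\;\gtrsim\;(2m)^{-c}\sum_{j=m}^{2m}j^{c-1}\;\asymp\;1,$$
uniformly in $r$. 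So with only $|f_{k}|\leq M$ on $[r,1)$ the boundary contribution does not become small, no matter how the inner sum is split around $j\sim(1-r)^{-1}$ or how Lemma 2.3 is used; the smallness must come from $\sup_{\mathbb{D}}|f_{k}|\rightarrow 0$, i.e., from the function side, which is precisely what $\widetilde{\omega}(1)<\infty$ supplies and what your argument leaves unused.
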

\begin{proof}
The  equivalence of $(a)\Leftrightarrow (b)\Leftrightarrow (e)$ follows from Theorem 4.2 immediately and  $(d)\Rightarrow (c)\Rightarrow (a)$ are obvious.   Therefore,    we only need to prove that $(e)\Rightarrow (d)$.

 Let $\{f_{k}\}_{k=1}^{\infty}$ be a bounded
sequence in $\bw$  which converges to $0$ uniformly on every compact subset of  $\mathbb{D}$.  Since $\widetilde{\omega}(1)<\infty$,  arguing as  the proof of Lemma 2.5 in \cite{Y1}, we have that
$$\lim_{k\rightarrow \infty}\sup_{z\in \dd}|f_{k}(z)|=0.$$
For each  $k\in \mathbb{N}$, we have
$$
\widetilde{\ii}_{\mu_{\alpha+1}}(f_{k})(z)=\I \frac{|f_{k}(t)|}{(1-tz)^{\alpha+1}}d\mu(t)=\sum_{n=0}^{\infty}c_{n,k}z^{n},
 $$
 where  $$c_{n,k}=\frac{\Gamma(n+1+\alpha)}{\Gamma(n+1)\Gamma(\alpha+1)}\I t^{n}|f_{k}(t)|d\mu(t).$$
It is obvious that $\{c_{n,k}\}_{n=1}^{\infty}$ is a nonnegative sequence for each $k \in \mathbb{N}$. To prove that $\widetilde{\ii}_{\mu_{\alpha+1}}: \bw\rightarrow \bv$ is compact, it is sufficient  to prove that
$$\lim_{k\rightarrow \infty}\left(c_{0,k}+\sup_{n\geq 1}\nu(1-\frac{1}{n})\sum_{j=1}^{n}jc_{j,k}\right) =0$$
by using Theorem 3.1  and Lemma 2.6.
If  $\displaystyle{\sup_{n\geq 1}n^{\alpha+2}\nu(1-\frac{1}{n})\mu_{n}<\infty}$, then
$$\mu_{n}\lesssim \frac{1}{n^{\alpha+2}\nu(1-\frac{1}{n})}\ \mbox{for all}\ n\in \mathbb{N}.$$
By Stirling's formula and the above inequality, we have
       \[ \begin{split}
& \ \ \ \ \  |c_{0,k}|+\sup_{n\geq 1}\nu(1-\frac{1}{n})\sum_{j=1}^{n}jc_{j,k}\\
& \lesssim \I |f_{k}(t)|d\mu(t)+ \sup_{n\geq 1}\nu(1-\frac{1}{n})\sum_{j=1}^{n}j^{\alpha+1}\I t^{j}|f_{k}(t)|d\mu(t)\\
&\lesssim \sup_{t\in [0,1)}|f_{k}(t)|+\sup_{t\in [0,1)}|f_{k}(t)|  \sup_{n\geq 1}\nu(1-\frac{1}{n})\sum_{j=1}^{n}j^{\alpha+1}\mu_{j}\\
&\lesssim \sup_{t\in [0,1)}|f_{k}(t)|+\sup_{t\in [0,1)}|f_{k}(t)|  \sup_{n\geq 1}\nu(1-\frac{1}{n})\sum_{j=1}^{n}\frac{1}{j\nu(1-\frac{1}{j})}\\
&\lesssim \sup_{t\in [0,1)}|f_{k}(t)|\rightarrow 0 , \ \ (k\rightarrow\infty).
         \end{split} \]
Hence (d) holds.
\end{proof}

\section{Some Applications}

As a direct application of the above results, we first consider the operator $\ii_{\mu_{\alpha+1}}$ acting from $\B^{\beta}$ to $\B^{\gamma}$. If $\gamma\geq \alpha+2$, then it is easy to see that  $\ii_{\mu_{\alpha+1}}: \mathcal {B}^{\beta}\rightarrow\mathcal {B}^{\gamma}$ is always a  bounded operator under the condition (4.1). Therefore,  we only need to consider  $0<\gamma<\alpha+2$.

\begin{thm}
Let $\mu$ be a positive Borel measure on $[0, 1)$ and satisfies  $\I \log\frac{e}{1-t}d\mu(t)<\infty$, $\alpha>-1$.  If $0<\gamma<\alpha+2$, then  the following statements are equivalent.
\\ (a)\ $\ii_{\mu_{\alpha+1}}: \mathcal {B}\rightarrow\mathcal {B}^{\gamma}$ is bounded;
\\(b)\ $\mu$ is a $1$-logarithmic $\alpha+2-\gamma$-Carleson measure;
\\(c)\ $\displaystyle{\I t^{n}\log\frac{e}{1-t}d\mu(t)=O(\frac{1}{n^{\alpha+2-\gamma}})}$.
\end{thm}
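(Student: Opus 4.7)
The plan is to deduce Theorem 5.1 as a direct specialization of Theorem 4.2 to the weights that produce $\mathcal{B}$ and $\mathcal{B}^{\gamma}$, and then translate the resulting Taylor-moment condition into the logarithmic Carleson condition (b) through an auxiliary measure.

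First I would take $\omega(s)=1-s^{2}$, so that $\mathcal{B}_{\omega}=\mathcal{B}$ and
$$\widetilde{\omega}(t)=\tfrac{1}{2}\log\tfrac{1+t}{1-t}\asymp\log\tfrac{e}{1-t},$$
which means the hypothesis $\int_{0}^{1}\log\frac{e}{1-t}\,d\mu(t)<\infty$ is exactly (4.1). Next take $\nu(s)=(1-s^{2})^{\gamma}$, so $\mathcal{B}_{\nu}=\mathcal{B}^{\gamma}$ and $\nu(1-\frac{1}{n})\asymp n^{-\gamma}$. Feeding these into the criterion of Theorem 4.2 turns condition (c) there into
$$\sup_{n\geq 1}n^{\alpha+2-\gamma}\int_{0}^{1}t^{n}\log\tfrac{e}{1-t}\,d\mu(t)<\infty,$$
which is exactly (c) of Theorem 5.1. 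This immediately yields (a)$\Leftrightarrow$(c) with no extra work.

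To obtain (b)$\Leftrightarrow$(c) I would introduce the auxiliary measure $d\lambda(t)=\log\frac{e}{1-t}\,d\mu(t)$ and show that $\mu$ is a $1$-logarithmic $(\alpha+2-\gamma)$-Carleson measure if and only if $\lambda$ is an $(\alpha+2-\gamma)$-Carleson measure (this is the content of Lemma~2.5 of \cite{H5}, referenced in the commented-out corollary in the source). The easy direction uses $\lambda([t,1))\geq\mu([t,1))\log\frac{e}{1-t}$; for the reverse direction I would write out the Fubini identity
$$\lambda([t,1))=\mu([t,1))\log\tfrac{e}{1-t}+\int_{t}^{1}\frac{\mu([u,1))}{1-u}\,du$$
and bound the integral term by $\int_{t}^{1}(1-u)^{s-1}/\log\frac{e}{1-u}\,du\lesssim(1-t)^{s}$ with $s=\alpha+2-\gamma>0$, which is precisely where the standing assumption $\gamma<\alpha+2$ is used. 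Finally, since the $n$-th moment of $\lambda$ is $\int_{0}^{1}t^{n}\log\frac{e}{1-t}\,d\mu(t)$, the classical moment characterization of $s$-Carleson measures (Theorem~2.1 in \cite{H10}) identifies $\lambda$ being $(\alpha+2-\gamma)$-Carleson with the moment estimate (c).

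The only genuinely substantive step is the equivalence between a $1$-logarithmic Carleson condition on $\mu$ and a plain Carleson condition on the weighted measure $\lambda$; once that bookkeeping is in place, the theorem is a one-line application of Theorem 4.2. I do not expect any serious obstacle, since this last lemma is standard and the hypothesis $\gamma<\alpha+2$ guarantees all the exponents in the integration step are positive.
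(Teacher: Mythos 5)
Your proposal is correct and follows essentially the same route as the paper: specialize Theorem 4.2 with $\omega(s)=1-s^{2}$, $\nu(s)=(1-s^{2})^{\gamma}$ to get (a)$\Leftrightarrow$(c), then pass to $d\lambda(t)=\log\frac{e}{1-t}\,d\mu(t)$ and use the equivalence of the $1$-logarithmic Carleson condition on $\mu$ with the plain Carleson condition on $\lambda$ (Lemma 2.5 of \cite{H5}) together with the moment characterization of Carleson measures (Theorem 2.1 of \cite{H10}). The only difference is that you sketch a direct proof of the \cite{H5} lemma via the Fubini identity, which the paper simply cites; that sketch is also correct.
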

\begin{proof}
Let $d\lambda(t)=\log\frac{e}{1-t}d\mu(t)$, then Lemma 2.5 in  \cite{H5} shows that
$\mu$ is a $1$-logarithmic
$\alpha+2-\gamma$-Carleson measure if and only if $\lambda$ is an $\alpha+2-\gamma$-Carleson measure. By Theorem 2.1 in \cite{H10},  $\lambda$ is an $\alpha+2-\gamma$-Carleson measure if and only if $$\I  t^{n}d\lambda(t)= O(\frac{1}{n^{\alpha+2-\gamma}}).$$
The desired result follows  from Theorem 4.2 immediately.
\end{proof}
\begin{re}
If $\gamma=1$ and $\alpha=0$, the result of Theorem 5.1  have been obtained in  \cite{H5}(or \cite{H21}).   In addition, if  $\gamma=1$ and  $\alpha=1$, the result have been given in \cite{H19}.
\end{re}

\begin{thm}
Let $\mu$ be a positive Borel measure on $[0, 1)$ and satisfies  $\I \frac{d\mu(t)}{(1-t)^{\beta-1}}<\infty$,  $\alpha>-1$. If $0<\gamma<\alpha+2$ and $\beta>1$, then $\ii_{\mu_{\alpha+1}}: \mathcal {B}^{\beta}\rightarrow\mathcal {B}^{\gamma}$ is bounded if and only if $\mu$ is an $\alpha+1+\beta-\gamma$-Carleson measure.
\end{thm}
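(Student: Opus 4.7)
The spaces $\mathcal{B}^\beta$ and $\mathcal{B}^\gamma$ are $\bw$ and $\bv$ for the normal weights $\omega(s)=(1-s^2)^\beta$ and $\nu(s)=(1-s^2)^\gamma$. My plan is to reduce the statement to Theorem 4.2 by computing $\widetilde{\omega}$ explicitly, and then to translate the resulting moment condition on $\mu$ into the Carleson-measure condition via Lemma 2.5 and the standard moment characterization of Carleson measures, in direct parallel with the proof of Theorem 5.1.

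A direct one-variable integral estimate using $\beta>1$ gives
$$\widetilde{\omega}(t)=\int_0^t\frac{ds}{(1-s^2)^\beta}\asymp (1-t)^{1-\beta},\qquad t\in[0,1).$$
Since $(1-t)^{1-\beta}\geq 1$ on $[0,1)$ when $\beta>1$, it follows that $\widetilde{\omega}(t)+1\asymp (1-t)^{1-\beta}$ uniformly on $[0,1)$. Consequently the hypothesis $\I \frac{d\mu(t)}{(1-t)^{\beta-1}}<\infty$ is precisely the integrability condition (4.1) needed to invoke Theorem 4.2.

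Applying Theorem 4.2 with the above $\omega$ and $\nu$, boundedness of $\ii_{\mu_{\alpha+1}}:\mathcal{B}^\beta\rightarrow\mathcal{B}^\gamma$ is equivalent to
$$\sup_{n\geq 1} n^{\alpha+2}\nu\!\left(1-\tfrac{1}{n}\right)\I t^n\bigl(\widetilde{\omega}(t)+1\bigr)\,d\mu(t)<\infty.$$
Using $\nu(1-1/n)\asymp n^{-\gamma}$ together with $\widetilde{\omega}(t)+1\asymp (1-t)^{1-\beta}$, and setting $d\lambda(t)=\frac{d\mu(t)}{(1-t)^{\beta-1}}$, this simplifies to
$$\sup_{n\geq 1} n^{\alpha+2-\gamma}\I t^n\,d\lambda(t)<\infty.$$
Because $\alpha+2-\gamma>0$ by hypothesis, the moment characterization of $s$-Carleson measures (Theorem 2.1 of \cite{H10}, as invoked in the proof of Theorem 5.1) shows this is equivalent to $\lambda$ being an $(\alpha+2-\gamma)$-Carleson measure. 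A final application of Lemma 2.5, with its $\gamma$ replaced by $\beta-1$ and its $\beta$ replaced by $\alpha+2-\gamma$, converts the condition on $\lambda$ into the statement that $\mu$ is an $(\alpha+1+\beta-\gamma)$-Carleson measure, which is the desired conclusion.

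The argument is essentially a chain of reductions, so no single step is a serious obstacle. The one substantive analytic input is the asymptotic $\widetilde{\omega}(t)+1\asymp(1-t)^{1-\beta}$, which is routine; the rest is bookkeeping and application of the tools already established in Sections~2--4.
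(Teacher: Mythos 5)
Your proposal is correct and follows the same route as the paper: apply Theorem 4.2 with $\omega(s)=(1-s^2)^{\beta}$, $\nu(s)=(1-s^2)^{\gamma}$, reduce to the moment condition $\int_0^1 t^n\,\frac{d\mu(t)}{(1-t)^{\beta-1}}=O(n^{-(\alpha+2-\gamma)})$, identify this with $d\lambda=(1-t)^{1-\beta}d\mu$ being an $(\alpha+2-\gamma)$-Carleson measure, and finish with Lemma 2.5. You merely make explicit the asymptotic $\widetilde{\omega}(t)+1\asymp(1-t)^{1-\beta}$ and the citation of the moment characterization, which the paper leaves implicit.
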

\begin{proof}
It follows from  Theorem 4.2  that $\ii_{\mu_{\alpha+1}}: \mathcal {B}^{\beta}\rightarrow\mathcal {B}^{\gamma}$ is bounded if and only if
 $$\I t^{n}\frac{d\mu(t)}{(1-t)^{\beta-1}}= O(\frac{1}{n^{\alpha+2-\gamma}}).$$
This  is equivalent to  saying that $\frac{d\mu(t)}{(1-t)^{\beta-1}}$ is an $\alpha+2-\gamma$-Carleson measure. The proof can be done by  using  Lemma 2.5.
\end{proof}

\begin{thm}
Let $\mu$ be a finite  positive Borel measure on $[0, 1)$ and  $\alpha>-1$. If $0<\gamma<\alpha+2$ and $0<\beta<1$, then the following statements are equivalent.
 \\ (a)\ $\ii_{\mu_{\alpha+1}}: \mathcal {B}^{\beta}\rightarrow\mathcal {B}^{\gamma}$ is bounded;
 \\ (b)\ $\ii_{\mu_{\alpha+1}}: \mathcal {B}^{\beta}\rightarrow\mathcal {B}^{\gamma}$ is compact;
 \\ (c)\  $\mu$ is an $\alpha+2-\gamma$-Carleson measure.
\end{thm}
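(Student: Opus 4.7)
The plan is to reduce Theorem 5.4 directly to the general equivalences established in Theorem 4.3, by verifying that the hypothesis $\widetilde{\omega}(1)<\infty$ of that theorem is automatically satisfied here. Specifically, taking $\omega(|z|)=(1-|z|^{2})^{\beta}$ and $\nu(|z|)=(1-|z|^{2})^{\gamma}$, both functions are normal, and the condition $0<\beta<1$ is precisely what forces $\widetilde{\omega}(t)=\int_{0}^{t}(1-s^{2})^{-\beta}ds$ to remain bounded as $t\to 1^{-}$. Since $\mu$ is a finite measure and $\widetilde{\omega}+1$ is bounded on $[0,1)$, the integrability condition (4.1) holds trivially, so the setup of Theorem 4.3 applies.

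Given this, Theorem 4.3 immediately gives the chain of equivalences $(a)\Leftrightarrow(b)\Leftrightarrow$ the numerical condition
\[
\sup_{n\geq 1}n^{\alpha+2}\,\nu(1-\tfrac{1}{n})\,\mu_{n}\ =\ \sup_{n\geq 1}n^{\alpha+2}\bigl(1-(1-\tfrac{1}{n})^{2}\bigr)^{\gamma}\mu_{n}\ \asymp\ \sup_{n\geq 1}n^{\alpha+2-\gamma}\mu_{n}<\infty.
\]
Thus it remains only to identify this Taylor-moment condition with statement (c).

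For the final step, I would invoke the classical characterization of Carleson measures in terms of moments (Theorem 2.1 in \cite{H10}, which is the same tool used in the proof of Theorem 5.1): a positive Borel measure $\mu$ on $[0,1)$ is an $s$-Carleson measure if and only if $\mu_{n}=O(n^{-s})$. Applying this with $s=\alpha+2-\gamma>0$ (which is positive because $\gamma<\alpha+2$) converts the moment condition above into the statement that $\mu$ is an $(\alpha+2-\gamma)$-Carleson measure, completing the equivalence.

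I do not anticipate any substantive obstacle: the whole argument is an application of Theorem 4.3 to a specific pair of weights, and the role of the hypothesis $0<\beta<1$ is precisely to secure $\widetilde{\omega}(1)<\infty$ (if $\beta\geq 1$ this would fail and one would instead have to appeal to Theorem 4.2, which gives only boundedness, not compactness — this is why the compactness equivalence $(a)\Leftrightarrow(b)$ appears specifically in the range $0<\beta<1$). The only small care needed is the asymptotic $1-(1-\frac{1}{n})^{2}\asymp \frac{1}{n}$ used to reduce the weight $\nu(1-\frac{1}{n})$ to $n^{-\gamma}$.
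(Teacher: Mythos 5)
Your proposal is correct and follows exactly the paper's route: the paper proves Theorem 5.4 simply by declaring it a direct consequence of Theorem 4.3, and your argument just spells out the details it leaves implicit (that $0<\beta<1$ gives $\widetilde{\omega}(1)<\infty$, that $\nu(1-\frac{1}{n})\asymp n^{-\gamma}$, and the standard moment characterization of $s$-Carleson measures used already in Theorem 5.1).
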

\begin{proof}
This is a direct consequence of Theorem 4.3.
\end{proof}
\begin{re}
It should be mentioned that Ye and Zhou have  obtained some results of Theorem 5.1-5.4 by using the duality theorem in \cite{Y}. In fact, they  dealt with $\gamma=\alpha$ and  $\alpha\geq 1$.
\end{re}

In what follows, we consider the operator $\ii_{\mu_{\alpha+1}}$ acting between  logarithmic Bloch spaces.
\begin{thm}
Let $\alpha>-1$, $\beta>-1$, $\gamma\in \mathbb{R}$. Suppose that $\mu$ is a positive Borel measure on $[0, 1)$ and satisfies
$\I \frac{\log^{\beta}\frac{e}{1-t}}{1-t}d\mu(t)<\infty.$
Then  the following statements are equivalent.
\\(a)\ $\ii_{\mu_{\alpha+1}}: \mathcal {B}_{\log^{\beta}}\rightarrow\mathcal {B}_{\log^{\gamma}}$ is bounded;
\\ (b)\ $\displaystyle{\sup_{n\geq1}n^{\alpha+1}\log^{-\gamma}(n+1)\I t^{n} \log^{\beta+1}\frac{e}{1-t}d\mu(t)<\infty}$;
\\(c)\  $\displaystyle{\sup_{t\in [0,1)} \frac{\mu([t,1))(\log\frac{e}{1-t})^{\beta+1-\gamma}}{(1-t)^{\alpha+1}}<\infty  }$.
\end{thm}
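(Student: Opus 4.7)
\medskip

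\noindent\emph{Plan of proof.}
The strategy is to deduce $(a)\Leftrightarrow(b)$ from Theorem 4.2 by a direct specialization to logarithmic weights, and then establish $(b)\Leftrightarrow(c)$ by routing through an auxiliary measure together with a moment/Carleson correspondence of logarithmic type.

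For $(a)\Leftrightarrow (b)$, I would set $\omega(s)=(1-s^{2})\log^{-\beta}\frac{e}{1-s^{2}}$ and $\nu(s)=(1-s^{2})\log^{-\gamma}\frac{e}{1-s^{2}}$, so that $\mathcal{B}_{\omega}=\mathcal{B}_{\log^{\beta}}$ and $\mathcal{B}_{\nu}=\mathcal{B}_{\log^{\gamma}}$, both normal functions. Since $\beta>-1$, a direct computation yields
$$\widetilde{\omega}(t)+1=1+\int_{0}^{t}\frac{\log^{\beta}\frac{e}{1-s^{2}}}{1-s^{2}}\,ds\asymp \log^{\beta+1}\frac{e}{1-t},$$
and the integrability hypothesis $\I\frac{\log^{\beta}\frac{e}{1-t}}{1-t}d\mu(t)<\infty$ dominates $\I\log^{\beta+1}\frac{e}{1-t}d\mu(t)$, so condition (4.1) holds. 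Using $\nu(1-\frac{1}{n})\asymp n^{-1}\log^{-\gamma}(n+1)$, one gets $n^{\alpha+2}\nu(1-\frac{1}{n})\asymp n^{\alpha+1}\log^{-\gamma}(n+1)$, and the criterion produced by Theorem 4.2 is exactly (b).

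For $(b)\Leftrightarrow (c)$, I would put $d\lambda(t)=\log^{\beta+1}\frac{e}{1-t}d\mu(t)$, so that (b) reads $\lambda_{n}=O\!\bigl(\log^{\gamma}(n+1)/n^{\alpha+1}\bigr)$. Two transfers are then carried out. First, by a moment/Carleson equivalence analogous to Theorem 2.1 of \cite{H10} but with a logarithmic weight, (b) is equivalent to $\lambda$ being a $(-\gamma)$-logarithmic $(\alpha+1)$-Carleson measure. Second, by an integration-by-parts argument in the spirit of Lemma 2.6, this is in turn equivalent to $\mu$ being a $(\beta+1-\gamma)$-logarithmic $(\alpha+1)$-Carleson measure, which is exactly (c). The key identity is
$$\lambda([t,1))=\log^{\beta+1}\tfrac{e}{1-t}\,\mu([t,1))+(\beta+1)\int_{t}^{1}\mu([s,1))\,\frac{\log^{\beta}\frac{e}{1-s}}{1-s}\,ds,$$
from which the forward bound on $\lambda([t,1))$ follows by plugging in (c) and using $\int_{t}^{1}(1-s)^{\alpha}\log^{\gamma-1}\frac{e}{1-s}\,ds\lesssim (1-t)^{\alpha+1}\log^{\gamma-1}\frac{e}{1-t}$, while the reverse direction uses $\lambda([t,1))\geq \log^{\beta+1}\frac{e}{1-t}\mu([t,1))$ (valid because $\beta+1>0$).

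The main obstacle is the logarithmic moment/Carleson equivalence: only the unweighted analog (Theorem 2.1 of \cite{H10}) is invoked elsewhere in the paper, so in a full write-up one would either cite an adapted version or prove it by a standard two-sided estimate $\lambda_{n}\asymp \int_{1-1/n}^{1}d\lambda(t)$ combined with the optimal choice $n\asymp 1/(1-t)$ in $\lambda([t,1))\leq t^{-n}\lambda_{n}$. Once this is in place, the rest is a clean specialization of the machinery developed in Sections 3 and 4.
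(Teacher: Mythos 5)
Your handling of $(a)\Leftrightarrow(b)$ is exactly the paper's: specialize Theorem 4.2 to $\omega(s)=(1-s^{2})\log^{-\beta}\frac{e}{1-s^{2}}$ and $\nu(s)=(1-s^{2})\log^{-\gamma}\frac{e}{1-s^{2}}$, note $\widetilde{\omega}(t)+1\asymp\log^{\beta+1}\frac{e}{1-t}$ (this is where $\beta>-1$ enters), check (4.1) from the stated integrability hypothesis, and observe $n^{\alpha+2}\nu(1-\frac{1}{n})\asymp n^{\alpha+1}\log^{-\gamma}(n+1)$. That part is correct and matches the paper.

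For $(b)\Leftrightarrow(c)$ you take a different route, and there is a genuine gap at its central step. You pass to $d\lambda(t)=\log^{\beta+1}\frac{e}{1-t}\,d\mu(t)$ and assert that the moment condition (b) is equivalent to $\lambda([t,1))\lesssim(1-t)^{\alpha+1}\log^{\gamma}\frac{e}{1-t}$, to be justified by ``a standard two-sided estimate $\lambda_{n}\asymp\int_{1-1/n}^{1}d\lambda(t)$''. That two-sided estimate is false for a general Borel measure (take $\lambda=\delta_{1/2}$: then $\lambda_{n}=2^{-n}$ while $\int_{1-1/n}^{1}d\lambda=0$ for $n>2$), and the direction you actually need, namely that the logarithmic Carleson bound on $\lambda$ forces the moment decay in (b), is precisely the nontrivial content of the theorem; it is not covered by Theorem 2.1 of \cite{H10}, which is the unweighted case. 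The step is repairable — either by a dyadic decomposition of $[0,1)$ (summing terms of size $e^{-n2^{-k}}2^{-k(\alpha+1)}k^{\gamma}$), or, as the paper does, by bypassing $\lambda$ entirely: $(b)\Rightarrow(c)$ is the trivial restriction estimate $\mu([1-\frac{1}{n},1))\log^{\beta+1}(n+1)\lesssim\int_{1-1/n}^{1}t^{n}\log^{\beta+1}\frac{e}{1-t}\,d\mu(t)$, and $(c)\Rightarrow(b)$ follows from one integration by parts, $\int_{0}^{1}t^{n}\log^{\beta+1}\frac{e}{1-t}\,d\mu(t)=n\int_{0}^{1}t^{n-1}\mu([t,1))\log^{\beta+1}\frac{e}{1-t}\,dt+(\beta+1)\int_{0}^{1}t^{n}\mu([t,1))\log^{\beta}\frac{e}{1-t}\,\frac{dt}{1-t}$, after which the bound in (c) reduces everything to the moment asymptotics of the regular weights $(1-t)^{\alpha+1}\log^{\gamma}\frac{e}{1-t}$ and $(1-t)^{\alpha}\log^{\gamma-1}\frac{e}{1-t}$ from \cite{L27}. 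Your second transfer (the distribution-function identity relating $\lambda([t,1))$ to $\mu([t,1))$, used in both directions with $\beta+1>0$) is correct, but it is an extra layer: since a dyadic or integration-by-parts argument is unavoidable to get moment decay, applying it directly to $\mu$ as the paper does is shorter and closes the very step your write-up leaves open.
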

\begin{proof}
It follows from Theorem 4.2 that $(a)\Leftrightarrow (b)$. We only need to show  that $(b)\Leftrightarrow (c)$.
The implication $(b)\Rightarrow(c)$ follows from the inequality
$$\mu\left([1-\frac{1}{n},1)\right)\log^{\beta+1}(n+1)\lesssim \int_{1-\frac{1}{n}}^{1} t^{n}\log^{\beta+1}\frac{e}{1-t}d\mu(t)\lesssim \frac{\log^{\gamma}(n+1)}{n^{\alpha+1}}.$$

 $(c)\Rightarrow(b)$.    Assume (c). Then   there exists a positive constant $C$ such that
 $$\mu\left([t,1)\right)\left(\log\frac{e}{1-t}\right)^{\beta+1-\gamma}\leq C (1-t)^{\alpha+1}, \ \ 0\leq t<1.$$
 Integrating by parts, we obtain
  \[ \begin{split}
 &\ \ \ \   \ \   \I t^{n}\log^{\beta+1}\frac{e}{1-t}d\mu(t)\\
 &=n \I t^{n-1}\mu([t,1))\log^{\beta+1}\frac{e}{1-t}dt+(\beta+1)\I t^{n}\mu([t,1))\log^{\beta}\frac{e}{1-t}\frac{dt}{1-t}\\
& \lesssim n \I t^{n-1}(1-t)^{\alpha+1}\log^{\gamma}\frac{e}{1-t}dt+ \I t^{n}(1-t)^{\alpha}\log^{\gamma-1}\frac{e}{1-t}dt.
     \end{split} \]
     Note that
    $$ \phi_{1}(t)=(1-t)^{\alpha+1}\log^{\gamma}\frac{e}{1-t}, \ \  \phi_{2}(t)= (1-t)^{\alpha}\log^{\gamma-1}\frac{e}{1-t} $$
    are regular in the sense of  \cite{L27}. Then, using
Lemma 1.3 and (1.1) in \cite{L27}, we have
$$ n \I t^{n-1}(1-t)^{\alpha+1}\log^{\gamma}\frac{e}{1-t}dt \asymp \frac{\log^{\gamma}(n+1)}{n^{\alpha+1}} $$
and
$$\I t^{n}(1-t)^{\alpha}\log^{\gamma-1}\frac{e}{1-t}dt\asymp \frac{\log^{\gamma-1}(n+1)}{n^{\alpha+1}}.$$
These two estimates imply that
$$ \I t^{n}\log^{\beta+1}\frac{2}{1-t}d\mu(t)\lesssim \frac{\log^{\gamma}(n+1)}{n^{\alpha+1}}. $$
Thus,  (b) holds.
\end{proof}
Arguing as  the proof of previous theorem, one can obtain the following theorems.
\begin{thm}
Let $\alpha>-1$, $\beta=-1$, $\gamma\in \mathbb{R}$. Suppose that $\mu$ is a positive Borel measure on $[0, 1)$ and satisfies
$\I \log\log\frac{e}{1-t}d\mu(t)<\infty$.
Then  the following statements are equivalent.
\\(a)\ $\ii_{\mu_{\alpha+1}}: \mathcal {B}_{\log^{-1}}\rightarrow\mathcal {B}_{\log^{\gamma}}$ is bounded;
\\ (b)\ $\displaystyle{\sup_{n\geq1}n^{\alpha+1}\log^{-\gamma}(n+1)\I t^{n} \log\log\frac{e}{1-t}d\mu(t)<\infty}$;
\\(c)\  $\displaystyle{\sup_{t\in [0,1)} \frac{\mu([t,1))\log\log\frac{e}{1-t}}{(1-t)^{\alpha+1}\log^{\gamma}\frac{e}{1-t}}<\infty  }$.
\end{thm}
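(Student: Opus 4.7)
The plan is to mirror the proof of Theorem 5.5, adapted to the endpoint weight $\beta=-1$ where the exponent $\beta+1=0$ degenerates and $\log\log$ appears in place of the logarithmic factor. The underlying normal weights are $\omega(s)=(1-s^{2})\log\frac{e}{1-s^{2}}$ (for $\B_{\log^{-1}}$) and $\nu(s)=(1-s^{2})\log^{-\gamma}\frac{e}{1-s^{2}}$ (for $\B_{\log^{\gamma}}$). Both are normal, and $\nu(1-\frac{1}{n})\asymp \frac{1}{n}\log^{-\gamma}(n+1)$.

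First I would verify $\widetilde{\omega}(t)+1\asymp \log\log\frac{e}{1-t}$. Indeed, with the substitution $u=\log\frac{e}{1-s}$,
\[
\widetilde{\omega}(t)=\int_{0}^{t}\frac{ds}{(1-s^{2})\log\frac{e}{1-s^{2}}}\asymp \int_{0}^{t}\frac{ds}{(1-s)\log\frac{e}{1-s}}\asymp \log\log\frac{e}{1-t}
\]
for $t$ near $1$. Hence the standing hypothesis $\int_{0}^{1}(\widetilde{\omega}(t)+1)d\mu(t)<\infty$ in Theorem 4.2 is exactly $\int_{0}^{1}\log\log\frac{e}{1-t}d\mu(t)<\infty$, and condition (c) of Theorem 4.2 becomes
\[
\sup_{n\geq 1}n^{\alpha+2}\cdot\tfrac{1}{n}\log^{-\gamma}(n+1)\int_{0}^{1}t^{n}\log\log\tfrac{e}{1-t}d\mu(t)<\infty,
\]
which is statement (b). This yields $(a)\Leftrightarrow (b)$.

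For $(b)\Rightarrow(c)$, I would use the same localization trick: restricting the integral to $[1-\frac{1}{n},1)$ and bounding $t^{n}\geq (1-\frac{1}{n})^{n}\gtrsim 1$ gives
\[
\mu\bigl([1-\tfrac{1}{n},1)\bigr)\log\log(n+1)\lesssim \int_{1-\frac{1}{n}}^{1}t^{n}\log\log\tfrac{e}{1-t}d\mu(t)\lesssim \frac{\log^{\gamma}(n+1)}{n^{\alpha+1}},
\]
and comparing at $t=1-\frac{1}{n}$ delivers (c).

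The direction $(c)\Rightarrow(b)$ is the main technical point. Integrating by parts in $d\mu(t)=-d\mu([t,1))$, using $\frac{d}{dt}\log\log\frac{e}{1-t}=\frac{1}{(1-t)\log\frac{e}{1-t}}$, and discarding the boundary terms (they vanish because $(1-t)^{\alpha+1}\log^{\gamma}\frac{e}{1-t}\to 0$ under (c)), I get
\[
\int_{0}^{1}t^{n}\log\log\tfrac{e}{1-t}d\mu(t)=n\int_{0}^{1}t^{n-1}\mu([t,1))\log\log\tfrac{e}{1-t}\,dt+\int_{0}^{1}\frac{t^{n}\mu([t,1))}{(1-t)\log\frac{e}{1-t}}\,dt.
\]
Applying (c) to each integrand yields bounds by
\[
n\int_{0}^{1}t^{n-1}(1-t)^{\alpha+1}\log^{\gamma}\tfrac{e}{1-t}\,dt\quad\text{and}\quad \int_{0}^{1}\frac{t^{n}(1-t)^{\alpha}\log^{\gamma-1}\tfrac{e}{1-t}}{\log\log\frac{e}{1-t}}\,dt,
\]
and both weights $(1-t)^{\alpha+1}\log^{\gamma}\frac{e}{1-t}$ and $(1-t)^{\alpha}\log^{\gamma-1}\frac{e}{1-t}$ are regular in the sense of \cite{L27}. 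Using Lemma 1.3 and (1.1) of \cite{L27} (and trivially bounding the harmless $1/\log\log$ by $1$), each of these integrals is $\asymp \log^{\gamma}(n+1)/n^{\alpha+1}$, proving (b). The only delicate point is controlling the second integral; the extra $1/\log\log$ factor is absorbed with room to spare, so the estimate of Theorem 5.5 transfers directly.
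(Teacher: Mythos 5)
Your route is exactly the paper's intended one: the paper proves this theorem only by the remark ``arguing as the proof of the previous theorem'', i.e.\ by adapting the proof of the $\beta>-1$ logarithmic case, and your proposal does precisely that — identify the normal weights, note $\widetilde{\omega}(t)+1\asymp\log\log\frac{e}{1-t}$ (for $t$ away from $0$), invoke Theorem 4.2 for $(a)\Leftrightarrow(b)$, localize to $[1-\frac1n,1)$ for $(b)\Rightarrow(c)$, and integrate by parts against $\mu([t,1))$ with the regular weights $(1-t)^{\alpha+1}\log^{\gamma}\frac{e}{1-t}$ and $(1-t)^{\alpha}\log^{\gamma-1}\frac{e}{1-t}$ for $(c)\Rightarrow(b)$.

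One step is justified incorrectly, though the conclusion survives: you cannot ``trivially bound the harmless $1/\log\log$ by $1$''. Since $\log\log\frac{e}{1-t}\to 0$ as $t\to 0^{+}$, the factor $1/\log\log\frac{e}{1-t}$ is unbounded near $t=0$ (and exceeds $1$ on the whole interval $(0,1-e^{1-e})$), so the pointwise bound you invoke is false there. The repair is a one-line split at $t=\frac12$: on $[\frac12,1)$ one has $\log\log\frac{e}{1-t}\gtrsim 1$, so after inserting (c) the unwanted factor can indeed be dropped and the regular-weight estimates of \cite{L27} give $\lesssim \log^{\gamma}(n+1)/n^{\alpha+1}$; on $[0,\frac12]$ do not insert (c) at all, but bound the second integral directly by $\mu([0,1))\,2^{-n}\int_{0}^{1/2}\frac{dt}{(1-t)\log\frac{e}{1-t}}\lesssim 2^{-n}$ (equivalently, note $\log\log\frac{e}{1-t}\asymp t$ near $0$, so the integrand is $\lesssim t^{n-1}$), which is negligible against $\log^{\gamma}(n+1)/n^{\alpha+1}$. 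The same caveat applies to your opening claim that (4.1) ``is exactly'' the hypothesis $\int_{0}^{1}\log\log\frac{e}{1-t}\,d\mu(t)<\infty$ and that Theorem 4.2(c) ``becomes'' (b): the comparison $\widetilde{\omega}(t)+1\asymp\log\log\frac{e}{1-t}$ fails near $t=0$ where the right-hand side vanishes, but finiteness of $\mu$ and the geometric smallness of $t^{n}$ on $[0,\frac12]$ make the discrepancy harmless; a sentence to this effect closes the gap. With these minor patches your argument is complete and coincides with the paper's.
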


\begin{thm}
Let $\alpha>-1$, $\beta<-1$, $\gamma\in \mathbb{R}$. Suppose that $\mu$ is a finite  positive Borel measure on $[0, 1)$, then  the following statements are equivalent.
\\(a)\ $\ii_{\mu_{\alpha+1}}: \mathcal {B}_{\log^{\beta}}\rightarrow\mathcal {B}_{\log^{\gamma}}$ is bounded;
\\(b)\  $\ii_{\mu_{\alpha+1}}: \mathcal {B}_{\log^{\beta}}\rightarrow\mathcal {B}_{\log^{\gamma}}$ is compact;
\\ (c)\ $\displaystyle{\sup_{n\geq1}n^{\alpha+1}\log^{-\gamma}(n+1)\mu_{n}<\infty}$;
\\(d)\  $\displaystyle{\sup_{t\in [0,1)} \frac{\mu([t,1))\log^{-\gamma}\frac{e}{1-t}}{(1-t)^{\alpha+1}}<\infty  }$.
\end{thm}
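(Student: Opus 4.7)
My plan is to derive all four equivalences from Theorem 4.3, combined with the standard moment/Carleson dictionary already used in the proof of Theorem 5.5.

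I would first set $\omega(r)=(1-r^{2})\log^{-\beta}\frac{e}{1-r^{2}}$ and $\nu(r)=(1-r^{2})\log^{-\gamma}\frac{e}{1-r^{2}}$, so that $\mathcal{B}_{\log^{\beta}}=\mathcal{B}_{\omega}$ and $\mathcal{B}_{\log^{\gamma}}=\mathcal{B}_{\nu}$. Both weights lie in $\mathcal{N}$: choosing $a$ slightly less than $1$ and $b$ slightly greater than $1$ works in every case, because any positive power of $1-r^{2}$ dominates any power of $\log\frac{e}{1-r^{2}}$ as $r\to 1$. The hypothesis $\beta<-1$ then guarantees
\[
\widetilde{\omega}(1)=\int_{0}^{1}\frac{\log^{\beta}\frac{e}{1-s^{2}}}{1-s^{2}}\,ds<\infty,
\]
via the substitution $u=\log\frac{e}{1-s^{2}}$, which reduces the integral near $s=1$ to $\int_{1}^{\infty}u^{\beta}\,du$. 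Since $\mu$ is a finite measure, the integrability hypothesis of Theorem 4.3 is automatic.

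With Theorem 4.3 at our disposal, $(a),(b)$ and the moment condition $\sup_{n\geq 1}n^{\alpha+2}\nu(1-\tfrac{1}{n})\mu_{n}<\infty$ become equivalent. Using $1-(1-\tfrac{1}{n})^{2}\asymp\tfrac{1}{n}$ and $\log\frac{e}{1-(1-1/n)^{2}}\asymp\log(n+1)$ one obtains $\nu(1-\tfrac{1}{n})\asymp\tfrac{1}{n}\log^{-\gamma}(n+1)$, and this moment condition collapses to (c). The compactness clause $(b)$ of the present theorem is covered by the same application of Theorem 4.3, so $(a)\Leftrightarrow(b)\Leftrightarrow(c)$ is settled at once.

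For $(c)\Leftrightarrow(d)$ I would imitate the argument inside the proof of Theorem 5.5. For $(d)\Rightarrow(c)$, an integration by parts (noting $\mu(\{1\})=0$) gives
\[
\mu_{n}=n\int_{0}^{1}t^{n-1}\mu([t,1))\,dt,
\]
and inserting (d) together with the regular-weight estimate from Lemma 1.3 of \cite{L27},
\[
n\int_{0}^{1}t^{n-1}(1-t)^{\alpha+1}\log^{\gamma}\frac{e}{1-t}\,dt\asymp\frac{\log^{\gamma}(n+1)}{n^{\alpha+1}},
\]
yields (c). For $(c)\Rightarrow(d)$, I would use the elementary estimate $\mu([t,1))\leq t^{-n}\mu_{n}$ and set $n=\lfloor 1/(1-t)\rfloor$, so that $t^{-n}\asymp 1$ and (c) transfers directly to the desired Carleson-type bound.

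The argument is essentially mechanical, and I expect no genuine obstacle. The one conceptually important step is checking that $\beta<-1$ really delivers $\widetilde{\omega}(1)<\infty$: it is precisely this fact that lets us invoke Theorem 4.3 rather than merely Theorem 4.2, and thereby pick up the compactness equivalence (b) at no extra cost.
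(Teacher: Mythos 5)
Your proposal is correct and follows essentially the route the paper intends: the paper's "argue as in the previous theorem" amounts precisely to invoking Theorem 4.3 (legitimate here because $\beta<-1$ gives $\widetilde{\omega}(1)<\infty$ and $\mu$ is finite, so $\widetilde{\omega}(t)+1\asymp 1$ and the moment condition reduces to (c)), followed by the same integration-by-parts/regular-weight computation from the proof of Theorem 5.6 to get (c)$\Leftrightarrow$(d). The only cosmetic difference is your choice $n=\lfloor 1/(1-t)\rfloor$ with $\mu([t,1))\leq t^{-n}\mu_{n}$ in (c)$\Rightarrow$(d) (where $t^{-n}\asymp 1$ should be read for, say, $t\geq\tfrac12$, the range $t<\tfrac12$ being trivial since $\mu$ is finite), versus the paper's equivalent estimate of the integral over $[1-\tfrac1n,1)$.
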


It is known that $\mathcal {H}$ maps $\mathcal {B}_{\log^{\beta}}$ into  $\mathcal {B}_{\log^{\beta+1}}$ for all $\beta \in  \mathbb{R}$(see e.g., \cite{i3}).  If   $\mu$ is Lebesgue measure on $[0, 1)$, then  Theorem 5.6-5.8  show that  the integral type Hilbert operator $\ii: \mathcal {B}_{\log^{\beta}}\rightarrow\mathcal {B}_{\log^{\beta+1}}$ is bounded if and only if $\beta>-1$.

%

\subsection*{Declarations}
The authors declare that there are no conflicts of interest regarding the publication of this paper.

  \subsection*{Availability of data and material}
Data sharing not applicable to this article as no datasets were generated or analysed during
the current study: the article describes entirely theoretical research.

\bibliographystyle{els}
\bibliography{sn-bibliography}

\end{document}